\documentclass[11pt,leqno,a4paper,twoside]{article} 
\usepackage[T1]{fontenc}
\usepackage[applemac]{inputenc} 
\usepackage[english]{babel}
\usepackage{amsmath,amssymb,amsthm}
\usepackage{amsfonts}
\theoremstyle{plain}  
\newtheorem{theorem}{Theorem}[section] 

\newtheorem{lemma}[theorem]{Lemma}

\theoremstyle{remark}
\newtheorem*{remark}{Remark}
\newtheorem*{remarks}{Remarks}
\setlength\parindent{11pt}
\pagestyle{myheadings}
\usepackage[dvipsnames]{xcolor}

\usepackage{euscript} 
\usepackage{mathrsfs}
\usepackage{fancyhdr}
\headheight=13.6pt
\usepackage{comment} 
\newcommand{\A}{{\EuScript{A}}}
\newcommand{\B}{{\EuScript{B}}} 
\newcommand{\D}{{\EuScript{D}}}
\newcommand{\E}{{\EuScript{E}}} 
 
\newcommand{\M}{{\EuScript{M}}}
\def\N{{\mathbb N}}
\def\R{{\mathbb R}}
\newcommand{\sset}{\smallsetminus}
\newcommand{\T}{{\EuScript{T}}}
\newcommand{\U}{{\EuScript{U}}}
\newcommand{\V}{{\EuScript{V}}}

\def\1{{\bf 1}}
\def\bfc{{\boldsymbol c}}
\def\e{{\rm e}}
\def\d{\,{\rm d}}
\def\dd{{\rm d}}
\def\dsp{\displaystyle}
\newcommand{\norm}[1]{\left\|#1\right\|}
\newcommand{\Vbs}[1]{\bigg|#1\bigg|}
\newcommand{\fl}[1]{\left\lfloor#1\right\rfloor}
\newcommand{\md}[2]{\equiv#1\,({\rm mod\,}#2)}
\numberwithin{equation}{section}

\paperheight297mm
\headsep=15pt
\textwidth 14cm
\textheight 245mm
\topmargin -23pt    
\evensidemargin  1.1cm
\oddsidemargin 1.1cm
\setlength{\footnotesep}{10pt}
\markboth{}{Small Gl sums and applications}
\pagestyle{fancy}
\fancyhf{}
\fancyhead{}
\lhead[\thepage]{{\it R. de la Bretche, M. Munsch \& G.Tenenbaum}}
\rhead[{\it Small Gl sums and applications}]{\thepage}
               \countdef\temps=170
                \temps=\time
                \countdef\nminutes=171{\nminutes = \time}
                \countdef\nheure=172
                \def\heure{\begingroup
                   \temps = \time \divide\temps by 60
                   \nheure = \temps
                   \nminutes = \time
                   \multiply\temps by 60
                   \advance\nminutes by -\temps
                   \ifnum\nminutes<10 \toks1 = {0}%
                   \else\toks1 = {}%
                   \fi
                   \number\nheure h\the\toks1 \number\nminutes
                \endgroup}%

\def\dater{\vglue-10mm\rightline{(\the\day/\the\month/\the\year)}}
                \def\dateheure{(version \the\day/\the\month/\the\year,\ \heure)}

\title{\bf Small Gl sums and applications}
 
\author{R\'egis de la Bret\`{e}che,  Marc Munsch \& Grald Tenenbaum\\
\smallskip
\centerline{\small\dateheure}\\ 
}
  
\newcommand{\Addresses}{{
  \bigskip
  \footnotesize
\textsc{R\'{e}gis de la Bret\`{e}che, IMJ-PRG, Universit\'e de Paris, Sorbonne Universit\'e,
   \goodbreak   CNRS UMR 7586, Case 7012, F-75013 Paris, France}\par\nopagebreak
  \textit{E-mail address:} \texttt{regis.de-la-breteche@imj-prg.fr}
\par 
  \medskip
\textsc{Marc Munsch,  Institut f\"{u}r Analysis und Zahlentheorie, 
8010 Graz, \goodbreak Steyrergasse 30, Graz, Austria}\par\nopagebreak
  \textit{E-mail address:} \texttt{munsch@math.tugraz.at}
 \par  
\medskip
\textsc{Grald Tenenbaum,  Institut lie Cartan de Lorraine, \goodbreak
Universit de Lorraine, BP 70239,
54506 VandÏuvre-ls-Nancy Cedex,
France
}\par\nopagebreak
  \textit{E-mail address:} \texttt{gerald.tenenbaum@univ-lorraine.fr}
}}

\date{~}
\begin{document}
\bibliographystyle{abbrv}
\maketitle

\footnotetext{
2010 Mathematics Subject Classification. 
Primary: 11L40, 11N37. Secondary : 05D05, 11F27. \\
Key words and phrases. GCD sums, multiplicative energy, character sums, Burgess' bound, theta functions, mollifiers.
}

\begin{abstract}
In recent years, maximizing Gl sums regained interest due to a firm link with large values of $L$-functions. In the present paper, we initiate\footnote{After a preprint by the second author \cite{energy} was released, the authors worked together  and obtained several improvements as well as other results which are now contained in this  version.} an investigation of small sums of Gl type, with respect to the $L^1$-norm.  We also consider the intertwined question of minimizing weighted versions of the usual multiplicative energy. We apply our estimates to: (i) a logarithmic refinement of Burgess' bound on character sums, improving previous results of Kerr, Shparlinski and Yau; (ii) an improvement on earlier lower bounds by Louboutin and the second author for the number of non vanishing theta functions associated to Dirichlet characters; and (iii) new lower bounds for low moments of character sums.
\end{abstract}

\bibliographystyle{plain}
\maketitle

\section{Introduction and statements of results}\label{Intro}
\subsection{Gl sums}\label{gcd}
\hspace{\parindent} 
 Given a subset $\M$ of the set $\N^*$ of positive integers and an exponent $\alpha\in]0,1]$, one traditionally defines the G\'al sum
$$S_\alpha( \M):=\sum_{m,n\in\M} \frac{(m,n)^{\alpha}}{ [m, n]^\alpha}=\sum_{m,n\in\M} \frac{(m,n)^{2\alpha}}{ (mn)^\alpha},$$ where $(m,n)$ denotes the greatest common divisor of $m$ and~$n$ and $[m,n]$ stands for their smallest common multiple.\par 
Bounding these sums had originally interesting applications in metric Diophantine approximation---see \cite{DuffinHarman, metric}. Recently, further study was carried out due to the connection with large values of the Riemann zeta function---see for instance \cite{polydisc, hilb,GalRad,sound}. \par 
In \cite{bs1, Gal}, lower bounds for maximal G\'al sums have been used to obtain lower bounds for 
 $$\max_{t\in [0,T]} |\zeta(\tfrac 12 +it)|,\quad\max_{\chi\in X_p^+} |L(\tfrac 12,\chi)|,$$ where 
$X_p^+$ is the set of even characters modulo $p$ and $L(s,\chi)$ is the $L$-series associated to a Dirichlet character $\chi$. In \cite{Gal}, La Bret\`eche and Tenenbaum proved that  
$$\max_{|\M |=N} \frac{S_{1/2}( \M )}{|\M |}=
\exp\Bigg\{\big(2\sqrt{2}+o(1)\big)\sqrt{\frac{\log N \log_3N}{ \log_2 N}}\Bigg\},$$
where, here and throughout, we denote by $\log_k$  the $k$-th  iterated  logarithm.
In this result, the cardinality of $\M $ is fixed while the size of its elements is not.
Moreover, it is also shown in \cite{Gal} that  the same estimate holds for
$$Q(\M ):=\sup_{\substack{\bfc\in \mathbb{C}^N\\ \norm\bfc_2=1}}
\Bigg| \sum_{m,n\in\M }c_m\overline{c_n} 
\frac{(m,n) }{ \sqrt{mn}}\Bigg| $$
where $\norm\bfc_p$ denotes the $p$-norm of the $N$-tuple $\bfc\in \mathbb{C}^N$.
\par 
In this work, we consider, for $\bfc=(c_n)_{1\leqslant n\leqslant N}\in\R^N$, the quantities 
\begin{equation*}
\begin{aligned}
&\V(\bfc;N):=\sum_{m , n\leqslant N} \frac{(m,n)}{m+n}c_mc_n,\quad\V_N:=N\inf_{\substack{ \bfc\in (\mathbb{R}^{+})^N \\ \norm\bfc_1=1 }} \V(\bfc;N),
\\
&\T(\bfc;N):=\sum_{m , n\leqslant N} \frac{(m,n)}{\sqrt{mn}}c_mc_n,\quad \T_N:=N\inf_{\substack{ \bfc\in (\mathbb{R}^{+})^N \\ \norm\bfc_1=1 }}\T(\bfc;N),
\end{aligned}
\end{equation*}
with the aim of asymptotically evaluating $\V_N$ and $\T_N$ as precisely as possible. \par 
 Our main result is as follows. For $u\in]0,1]$, we put
\begin{equation*}
\begin{aligned}
&y(u):=\tfrac12(-1+\sqrt{1+4u}),\ f(u):=2u\log \{1+y(u)\}-y(u)^2,\\
& Q(u):=u\log u-u+1,
\end{aligned}
\end{equation*}
and let $\beta\approx0.48155$ denote the solution to the equation $f(\beta)=Q(\beta)$. We have  $y(\beta)\approx0.35530$ and $\eta:=f(\beta)\approx0.16656<1/6$.  

\begin{theorem}
\label{gcdsumth}
 For $N\geqslant 3$, we have  \begin{equation}
\label{encVT}
(\log N)^{ \eta}\ll  \V_N\leqslant \tfrac12\T _N\ll   (\log N)^{ \eta}(\log_2N)^3.
\end{equation}
\end{theorem}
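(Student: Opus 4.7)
The middle inequality $\V_N \le \tfrac12\T_N$ in \eqref{encVT} is immediate from $m+n \ge 2\sqrt{mn}$, which gives $\V(\bfc;N) \le \tfrac12\T(\bfc;N)$ for every non-negative $\bfc$, hence for the infima. The two extremal bounds in \eqref{encVT} require separate arguments.

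For the \emph{upper bound} $\T_N \ll (\log N)^\eta (\log_2 N)^3$, the shape of $\eta$ as the common value $Q(\beta) = f(\beta)$, with $Q$ the Poisson rate function governing the Sathe--Selberg distribution of $\Omega$, strongly suggests testing with a vector concentrated on integers with a prescribed number of prime factors. I would set $k:=\lfloor\beta\log_2N\rfloor$ and let $\bfc$ be the uniform probability vector on $\B_k:=\{n\le N:\Omega(n)=k\}$, whose cardinality is $\asymp N(\log N)^{-Q(\beta)}/\sqrt{\log_2 N}$. Expanding $(m,n)=\sum_{d\mid(m,n)}\varphi(d)$ yields
$$
\T(\bfc;N) = \frac{1}{|\B_k|^2}\sum_{d\le N}\varphi(d)\Bigl(\sum_{\substack{n\le N,\,d\mid n\\\Omega(n)=k}}\frac{1}{\sqrt n}\Bigr)^2.
$$
Substituting $n=dm$ with $\Omega(m)=k-\Omega(d)$ and invoking a Selberg--Delange estimate for $\sum_{m\le N/d,\,\Omega(m)=l}m^{-1/2}$ reduces the outer sum to a discretised Dirichlet integral whose saddle-point analysis has stationary point $z=y(\beta)$ (the positive root of $y(1+y)=\beta$). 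The maximal exponent produced is $2\beta\log\{1+y(\beta)\}-y(\beta)^2=f(\beta)=\eta$, which matches the density exponent $Q(\beta)=\eta$ of $\B_k$; the polylogarithmic loss $(\log_2 N)^3$ absorbs uniformity defects in Selberg--Delange and the dyadic decomposition of the range of $d$.

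For the \emph{lower bound} $\V_N\gg(\log N)^\eta$, fix a non-negative $\bfc$ with $\|\bfc\|_1=1$. I would stratify by $\Omega$: set $c^{(k)}_n:=c_n\mathbf{1}_{\Omega(n)=k}$, so that $\sigma_k:=\sum_n c^{(k)}_n$ satisfies $\sum_k\sigma_k=1$. Since $\bfc\ge 0$, all cross-terms in $\V$ are non-negative, so $\V(\bfc;N)\ge\sum_k\V(\bfc^{(k)};N)$, where $\bfc^{(k)}$ denotes the restriction of $\bfc$ to level $k$. Crude tail estimates for $Q$ confine the effective range of $u=k/\log_2 N$ to a compact interval. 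For each admissible $k$, the bilinear Cauchy--Schwarz
$$
\Bigl(\sum_{m,n\le N}c^{(k)}_m \alpha_n\frac{(m,n)}{m+n}\Bigr)^2 \le \V(\bfc^{(k)};N)\,\V(\alpha;N)
$$
is applied against a mollifier $\alpha$ of the type used in the upper-bound construction (uniform on a suitable level $\B_l$, or a weighted analogue with parameter $z=y(u)$). Controlling $\V(\alpha;N)$ from above via the upper-bound machinery and the pairing from below via concentration of $\alpha$ together with non-negativity of $c^{(k)}$ gives $\V(\bfc^{(k)};N)\gg \sigma_k^2(\log N)^{\psi(u)}/N$ for an explicit exponent function $\psi$ built from $Q$ and $f$. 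The identity $f(\beta)=Q(\beta)$ forces $\min_u\psi(u)\ge\eta$, and summing over $k$ with $\sum\sigma_k=1$ yields the claimed $\V(\bfc;N)\gg(\log N)^\eta/N$.

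The principal obstacle is the lower bound: one must construct a mollifier that is simultaneously efficient for every admissible level $k$ and that propagates the $\ell^1$ normalisation through the Cauchy--Schwarz without non-negligible loss. The fact that the two \emph{a priori} independent rate functions $Q$ and $f$ cross at $u=\beta$ is what permits a single near-optimal mollifier shape to serve uniformly, ensuring that both inequalities in \eqref{encVT} share the common exponent $\eta$.
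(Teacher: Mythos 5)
The middle inequality is correct and for the right reason (AM--GM applied to the kernel). The extremal bounds, however, both have genuine gaps compared with what the paper actually does.

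\textbf{Upper bound.} Taking $\bfc$ uniform on the full level set $\B_k=\{n\leqslant N:\Omega(n)=k\}$ is not enough. The paper additionally imposes the localization condition
\begin{equation*}
\Omega(n,t)\leqslant \kappa\log_23t+C\qquad(1\leqslant t\leqslant N),
\end{equation*}
and invokes Ford's theorem (\cite[Th.~1]{Fo07}) to show the localized subset still has measure $\asymp|\B_k|/k$. This restriction is essential: the bound on $c_{md}$ used in the paper reads
\begin{equation*}
c_{md}\leqslant \frac{\gamma^{-C}y^{\Omega(md)}z^{\Omega(md,d)}}{(\log N)^{\beta\log y}(\log 2d)^{\beta\log z}}\qquad(d\leqslant\sqrt N),
\end{equation*}
and the $(\log 2d)^{-\beta\log z}$ factor, which makes the $d$-sum converge with the right exponent, is extracted precisely from the localization hypothesis. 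Without it, integers $n\in\B_k$ with an abnormally large smooth part (say $2^{k-O(1)}\mid n$) produce coefficients $x_d$ that are far too big for many small $d$, and the resulting $\T$-sum exceeds $(\log N)^{\eta}$ by more than polylogarithmic factors; Selberg--Delange uniformity does not repair this. Your two-parameter mollifier $y^{\Omega}z^{\Omega(\cdot,t)}$ only becomes useful once you know the argument of $z$ is bounded in terms of $\log_2 t$, which is exactly what \eqref{loc} provides.

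\textbf{Lower bound.} Your Cauchy--Schwarz for the bilinear form presupposes that the kernel $(m,n)/(m+n)$ is positive semi-definite. The paper establishes this explicitly, via the Fourier--Mellin representation
\begin{equation*}
\frac1{m+n}=\frac1{2\sqrt{mn}}\int_\R \Big(\frac nm\Big)^{i\tau}\frac{\dd\tau}{\cosh\pi\tau},
\end{equation*}
and then uses the positivity of $\widehat{1/\cosh}$ not merely as Cauchy--Schwarz but to \emph{discard off-diagonal terms and keep $t=m$, $d=n$}: that is the whole of the argument in the case where most of the $\bfc$-mass sits on integers with few prime factors, giving the exponent $Q(\beta)$ directly without any mollifier. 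Your per-level stratification $\V(\bfc;N)\geqslant\sum_k\V(\bfc^{(k)};N)$ is correct, but the central claim $\V(\bfc^{(k)};N)\gg\sigma_k^2(\log N)^{\psi(u)}/N$ with a uniform mollifier across all admissible $k$ is asserted rather than proved, and the lower bound on the pairing $\sum c^{(k)}_m\alpha_n(m,n)/(m+n)$ is not a triviality. The paper avoids all of this by working with the single threshold dichotomy $\D^{\pm}(\beta)$: mass on $\D^+$ gives $(\log N)^{f(\beta)}$ via the mollifier $h=\1*y^{\Omega}\geqslant(1+y)^{\omega}$ and Cauchy--Schwarz on the weighted divisor representation, and mass on $\D^-$ gives $(\log N)^{Q(\beta)}$ via the diagonal/positivity argument; the definition $f(\beta)=Q(\beta)$ closes the dichotomy. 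Your observation that the crossing of $Q$ and $f$ is what makes everything match is correct, but the way you attempt to exploit it (a single mollifier uniform across all levels) is not the mechanism in the paper, and it leaves nontrivial uniformity questions open.
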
 
This minimization question arises naturally in three different contexts. The first concerns  Burgess' famous  bound on multiplicative character sums, for which we derive a logarithmic improvement---see Section \ref{secburgess}.  The second  is related with non-vanishing of theta functions: in this case, it turns out that a related minimization problem yields better results---cf. Sections \ref{multiplisec} and \ref{nvth}. As an application of this latter estimate, we obtain lower bounds for low moments of character sums, stated in Section \ref{lbch}. This minimization problem might also have applications in metric Diophantine approximation. 
 \subsection{Multiplicative energy}\label{multiplisec}
Given two integer sets $\A ,\B \subset \left[1,N\right]$, the multiplicative energy, as defined for instance in \cite{Gowers4,Taogroup,TaoVu}, is the quantity
 $$ E(\A ,\B ):=\sum_{n\geqslant 1}\Bigg(\sum_{\substack{a\in\A,\,b\in\B\\ab=n}}1\Bigg)^2.$$  This notion turns out  to be of great importance in additive combinatorics. Under the additional restriction $\bfc\in \left\{0,1\right\}^N$, the weights $\bfc$ considered in Section \ref{gcd} can be viewed as indicators $\bfc=\1_{\B }$ of  integer sets~$\B \subset \left[1,N\right]$. In such a setting,  evaluating $\V_N$ amounts to minimizing $S(\B )/|\B |^2$ where 
\begin{equation}\label{defS} S(\B ):=\sum_{m,n \in \B  }\frac{(m,n)}{m+n}.\end{equation} It is not hard to see that this sum is intimately connected to 
 $E([1,N],\B ).$\par \goodbreak
    With  applications in mind, we need to bound the multiplicative energy in a symmetric situation, namely $E(\B ,\B )$. To be consistent with our previous approach and provide some flexibility, we define the weighted version of the multiplicative energy
\begin{equation}\label{weightedenergy} 
\E(\bfc;N):=\sum_{1\leqslant n\leqslant N^2}\Bigg(\sum_{\substack{d\leqslant N,\,t\leqslant N\\ dt=n}}c_dc_t\Bigg)^2,  \end{equation} 
and in turn
\begin{equation}\label{defm}  \E_N:=\inf_{\substack{\bfc\in (\mathbb{R}^{+})^N,\, \norm\bfc_1=1}} N^2 \E(\bfc;N), 
 \end{equation}
so that, if one restricts $\bfc=\1_{\B }$ to the indicator of sets $\B\subset [1,N] $, the corresponding problem amounts to minimizing $ {N^2 E(\B ,\B )}/{\vert \B \vert^4}$. \par 
Using  techniques similar to those employed in the proof of Theorem~\ref{gcdsumth}, we obtain
the following result, where we write 
\begin{equation}
\label{delta}
\delta:=Q(1/\log 2)=1-(1+\log_2 2)/\log 2\approx 0.08607
\end{equation}
 for the exponent appearing in the famous multiplication table problem of Erd\H{o}s \cite{Tenintervalle,Ford}.
\begin{theorem}\label{multh}
For $N\geqslant 3$, we have   
\begin{equation}
\label{estEN}
(\log N)^\delta(\log_2N)^{3/2}\ll \E_N\ll (\log N)^{ \delta}\big(\log_2N\big)^{6}.
\end{equation}
\end{theorem}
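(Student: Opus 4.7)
The lower bound in \eqref{estEN} should follow directly from a Cauchy--Schwarz argument combined with Ford's asymptotic for the multiplication table. Write $M_N := \{dt : d, t \leq N\}$. For any $\bfc \in (\mathbb{R}^+)^N$ with $\|\bfc\|_1 = 1$ we have $\sum_{d, t \leq N} c_d c_t = 1$, so splitting the sum according to the value $n := dt \in M_N$ and applying Cauchy--Schwarz gives
$$1 = \bigg(\sum_{n \in M_N} \sum_{\substack{dt = n \\ d, t \leq N}} c_d c_t \bigg)^{\!2} \leq |M_N| \cdot \E(\bfc; N).$$
Since Ford established $|M_N| \asymp N^2/\{(\log N)^{\delta} (\log_2 N)^{3/2}\}$, this yields $N^2 \E(\bfc; N) \gg (\log N)^{\delta} (\log_2 N)^{3/2}$, uniformly in $\bfc$, hence the desired lower bound on $\E_N$.

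For the upper bound, I plan to exhibit an explicit weight of the form $\bfc = \1_\B / |\B|$ for a carefully chosen set $\B \subset [1, N]$, which reduces the problem to controlling $N^2 E(\B, \B)/|\B|^4$, where $E(\B, \B) = \sum_n r_\B(n)^2$ and $r_\B(n) := \#\{(d, t) \in \B^2 : dt = n\}$. Inspired by Ford's extremal construction for the multiplication table, the natural candidate is a Sathe--Selberg-type class such as $\B := \{n \leq N : \Omega(n) = k\}$ with $k$ close to $(\log_2 N)/\log 2$, possibly refined with constraints on the dyadic distribution of prime factors. Such a $\B$ satisfies $|\B| \gg N / \{(\log N)^{\delta} (\log_2 N)^{1/2}\}$, which already captures the correct $(\log N)^{\delta}$ main exponent through the ratio $N^2/|\B|^2$.

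The decisive step is to establish $E(\B, \B) \ll |\B|^2 (\log_2 N)^{O(1)}$. The trivial bound $r_\B(n) \leq \binom{2k}{k} \asymp (\log N)^2/\sqrt{\log_2 N}$ is hopelessly wasteful; instead one must control $r_\B(n)$ through the number of balanced divisors of $n$ in dyadic ranges, in the spirit of Hooley's $\Delta$-function and the Erd\H{o}s--Tenenbaum--Ford anatomy of the multiplication table. Combining sharp moment bounds for such divisor-counting functions with the trivial identity $\sum_n r_\B(n) = |\B|^2$ should yield the required energy estimate up to a polylogarithmic loss, leading ultimately to the claimed $N^2 E(\B, \B)/|\B|^4 \ll (\log N)^{\delta} (\log_2 N)^6$. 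This fine control of $E(\B, \B)$ is the main obstacle, and the gap between the secondary exponents $3/2$ and $6$ reflects the accumulation of several polylogarithmic losses: one in the size of $\B$, one in the distribution of balanced divisors, and further losses in passing from pointwise to averaged divisor bounds.
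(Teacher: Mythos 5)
Your lower bound is complete and is exactly the paper's argument: Cauchy--Schwarz applied to $r(n)=\sum_{dt=n,\,d,t\leqslant N}c_dc_t$ together with Ford's bound $\ll N^2/\{(\log N)^{\delta}(\log_2N)^{3/2}\}$ for the size of the multiplication table. The upper bound, however, has a genuine gap, and not merely because the ``decisive step'' is left as a plan: the candidate $\B=\{n\leqslant N:\Omega(n)=k\}$ with $k$ close to $(\log_2N)/\log 2$ cannot work. First, the bookkeeping is off: for this $k$ one has $|\B|\asymp N(\log N)^{-\delta}(\log_2N)^{-1/2}$, so $N^2/|\B|^2\asymp(\log N)^{2\delta}\log_2N$, and even a perfect estimate $E(\B,\B)\ll|\B|^2(\log_2N)^{O(1)}$ would only give the exponent $2\delta$, not $\delta$. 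Second, and decisively, that energy estimate is false for this choice: every product $dt$ with $d,t\in\B$ has $\Omega(dt)=2k\approx(2/\log 2)\log_2N$, which exceeds $2\log_2N^2$ asymptotically, and in that range the number of admissible products is $P\ll N^2(\log N)^{-1+o(1)}$. Since $\sum_n r_\B(n)=|\B|^2$ and $r_\B$ is supported on at most $P$ integers, Cauchy--Schwarz gives $E(\B,\B)\geqslant|\B|^4/P$, hence
\begin{equation*}
\frac{N^2E(\B,\B)}{|\B|^4}\;\geqslant\;\frac{N^2}{P}\;\gg\;(\log N)^{1-o(1)},
\end{equation*}
far above the target $(\log N)^{\delta}(\log_2N)^{6}$; no refinement by conditions on the distribution of prime factors can rescue this, because the obstruction is simply that the products concentrate on too few integers.

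The correct choice---the paper's---is to halve the number of prime factors: take $\bfc$ as the indicator of the $n\in]N/2,N]$ with $\Omega(n)=k$, $k=\fl{(\log_2N)/\log 4}$, satisfying the regularity condition \eqref{loc} with $\kappa=1/\log 4$, so that the \emph{products} sit at the multiplication-table threshold $\Omega\approx(\log_2N)/\log 2$. The identity $2Q(1/\log 4)=Q(1/\log 2)=\delta$ then gives $\norm{\bfc}_1\gg N(\log N)^{-\delta/2}(\log_2N)^{-3/2}$, so the ratio $N^2/\norm{\bfc}_1^2$ costs only $(\log N)^{\delta}(\log_2N)^{3}$. Because $\B$ lies in a dyadic interval, $r(m)$ is bounded by the number of divisors of $m$ in $]N/2,N]$, and $r(m)>0$ forces $m$ itself to satisfy \eqref{loc} with $\kappa=1/\log 2$; the paper then invokes inequality (2.46) of Hall--Tenenbaum, \emph{Divisors}, to get $\sum_{n\leqslant N^2}r(n)^2\ll N^2(\log N)^{-\delta}$. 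This quantitative input is the precise substitute for your appeal to Hooley-$\Delta$-type moment bounds, and it is what produces the exponent $\delta$ rather than $2\delta$ or worse.
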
  
  
  \subsection{Improvement of Burgess' bound}\label{secburgess}
\hspace{\parindent} 
Let us consider
$
S(M,N;\chi) := \sum_{M<n \leqslant M+N} \chi(n),$
where $\chi$ is a Dirichlet character to the modulus $p$. The classical bound of P\'olya and Vinogradov reads 
\begin{equation}\label{PV}
S(M,N;\chi) \ll \sqrt{p} \log p
\end{equation}
provided $\chi$ is non principal. This is non trivial  for~$N > p^{1/2+\varepsilon}$.
A major breakthrough has been obtained by Burgess \cite{Burgesssaving,Burgesssaving2}, providing a saving for intervals of length $N \geqslant p^{1/4+\varepsilon}$. Precisely, for prime  $p$, non principal Dirichlet character modulo $p$, and integer $r\geqslant 1$, Burgess proved the following inequality
\begin{equation} \label{eq:Burgess}
S(M,N;\chi) \ll N^{1-1/r} p^{(r+1)/4r^2} \log p,
\end{equation} where the constant depends only on $r$. Even though much stronger results are expected, this bound remains  the sharpest unconditional estimate to date, as far as the values of the exponents are concerned.
 \par 
However, some logarithmic refinements have been obtained unconditionally---see \cite[chapter 14]{IK} following ideas from \cite{FrI}. The sharpest known is due to Kerr, Shparlinski and Yau, who proved
\begin{equation}\label{IgorKam}  S(M,N;\chi)  \ll N^{1-1/r} p^{(r+1)/4r^2} (\log p)^{1/4r}\qquad (r\geqslant 2). \end{equation} These improvements rely on an averaging argument which leads to counting the number of solutions of certain congruences modulo $p$. Initially, the averaging was carried out over the full interval, while, in \cite{Burgessrefine}, the authors restricted it to integers free of small prime factors. Theorem~\ref{gcdsumth} allows a similar argument with a set of higher density. We obtain the following result.
\begin{theorem}
\label{Burgess}
Let $r\geqslant 2$ be fixed. For all prime $p$, all integers $M$, $N$ such that
$$1\leqslant N\leqslant p^{1/2+1/4r},$$
and any non principal Dirichlet character $\chi$  to the modulus $p$, we have
\begin{align*}
\vert S(M,N;\chi)\vert  &\ll N^{1-1/r}p^{(r+1)/4r^2} \max_{1\leqslant x\leqslant p} \T _x^{1/2r}
\\&
\ll N^{1-1/r}p^{(r+1)/4r^2} (\log p)^{\eta/2r}(\log_2p)^{3/2r}
\end{align*}  where $\eta\approx 0.16656$ is the exponent appearing in Theorem~\ref{gcdsumth}.
\end{theorem}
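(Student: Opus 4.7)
The plan is to revisit the classical Burgess amplification, replacing the uniform averaging over an interval (or, as in \cite{Burgessrefine}, over integers free of small prime factors) by a weighted averaging against an arbitrary non-negative sequence $\bfc\in(\R^+)^N$ with $\norm\bfc_1=1$, and only choosing $\bfc$ at the very end to minimise the resulting $\T$-sum. Setting $V(y):=\sum_{m\leqslant N}\chi(y+m)$ and fixing a parameter $A\in[1,p]$ to be optimised, the standard shift identity $S(M+ab,N;\chi)=S(M,N;\chi)+O(ab)$ yields
$$A\,S(M,N;\chi)=\sum_{a\leqslant A,\,b\leqslant N}c_b\,V(M+ab)+O\Bigg(A^2\sum_{b\leqslant N}bc_b\Bigg),$$
so that, provided the error can be controlled, matters reduce to estimating the weighted sum $T:=\sum_{a,b}c_b V(M+ab)$.

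A H\"older inequality with exponent $2r$ then gives $|T|^{2r}\leqslant A^{2r-1}\sum_{y\bmod p}\nu(y)|V(y)|^{2r}$, where
$$\nu(y):=\sum_{\substack{a\leqslant A,\,b\leqslant N\\ M+ab\md{y}{p}}}c_b,$$
and a Cauchy--Schwarz step separates the right-hand side into $\bigl(\sum_y\nu(y)^2\bigr)^{1/2}\bigl(\sum_y|V(y)|^{4r}\bigr)^{1/2}$. The $(4r)$-th moment of $V$ is dealt with via Weil's bound, which under the hypothesis $N\leqslant p^{1/2+1/4r}$ is of shape $(2r)!^2N^{2r}p$. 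For the energy-type count $\sum_y\nu(y)^2$, expanding squares and using that $AN<p$ promotes the congruence $a_1b_1\md{a_2b_2}{p}$ to an actual equality, a standard GCD argument (writing $d=(b_1,b_2)$, $b_i=db_i'$, so that $a_1=kb_2'$, $a_2=kb_1'$ with $k$ ranging up to $A(b_1,b_2)/\max(b_1,b_2)$) produces
$$\sum_y\nu(y)^2\leqslant A\,\T(\bfc;N).$$

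Assembling these estimates and \emph{then} taking the infimum over $\bfc$ replaces $\T(\bfc;N)$ by $\T_N/N$, after which Theorem~\ref{gcdsumth} supplies the final logarithmic gain $\T_N^{1/(2r)}\ll(\log p)^{\eta/2r}(\log_2p)^{3/2r}$. The main obstacle will be to balance three competing constraints---the length $A$ of the outer shift, the inequality $AN\leqslant p$ required to lift the congruence to an equality, and the crossover in the Weil moment between the main term $(2r)!^2N^{2r}p$ and its $p^{1/2}$-tail---so that the exponent of $p$ matches Burgess' $(r+1)/4r^2$ \emph{exactly}, while the shift error $O\bigl(A^2\sum bc_b\bigr)$ is absorbed without spoiling the logarithmic saving. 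It should in fact be beneficial to localise the support of $\bfc$ to a subinterval $[1,x]$ with $x\leqslant N$ before optimising, which accounts for the $\max_{1\leqslant x\leqslant p}\T_x^{1/(2r)}$ factor in the first form of the statement.
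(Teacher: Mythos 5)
Your strategy --- replacing the unweighted averaging in Burgess's amplification by a weighted average against $\bfc$, postponing the choice of $\bfc$, and only at the end minimising a G\'al-type sum --- is indeed the paper's, and your bound $\sum_y\nu(y)^2\leqslant A\,\T(\bfc;N)$ is a correct analogue of the paper's Lemma~\ref{abcd}. Nonetheless, the proposal has two genuine gaps that together prevent it from reaching the stated range.

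The more serious one concerns the Weil step. You apply the moment estimate to $\sum_{y\bmod p}\lvert V(y)\rvert^{4r}$, where $V(y)=\sum_{m\leqslant N}\chi(y+m)$ is a character sum of the \emph{full} length $N$. Lemma~\ref{moments} (applied with $r$ replaced by $2r$ and $B=N$) gives $\ll(4r)^{2r}N^{2r}p+4rN^{4r}\sqrt p$, and the useful term $N^{2r}p$ dominates only when $N\ll p^{1/4r}$; for $N$ anywhere near $p^{1/2+1/4r}$ the moment is of size $N^{4r}\sqrt p$ and the argument collapses. Your claim that under $N\leqslant p^{1/2+1/4r}$ this moment is ``of shape $(2r)!^2N^{2r}p$'' is therefore false. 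What is missing is Burgess's essential use of the multiplicativity of $\chi$: after shifting $n\mapsto n+ab$ one writes $\chi(n+ab)=\chi(a)\chi(\overline{a}n+b)$ and collects the residues $\ell:=\overline{a}n\pmod p$, so that the high moment falls on the genuinely short inner sum $\sum_{b\leqslant B}\chi(\ell+b)$ of length $B\asymp p^{1/2r}$. That substitution --- absent from your setup --- is precisely what produces the exponent $p^{(r+1)/4r^2}$.

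The second gap is the treatment of the shift error. You bound $S(M+ab,N;\chi)-S(M,N;\chi)$ trivially by $O(ab)$, giving a total error $O\bigl(A^2\sum_bbc_b\bigr)$; with the Burgess parameters the typical shift $ab$ is of size comparable to $N$, so the trivial bound yields an error of order $N$, which exceeds the target $N^{1-1/r}p^{(r+1)/4r^2}$ as soon as $N>p^{(r+1)/4r}$, i.e.\ throughout most of the claimed range. The paper circumvents this by an \emph{induction on $N$} (base case $N<p^{1/4}$): the boundary pieces $V$, $W$ are themselves character sums over intervals of length $ab\leqslant AB\leqslant N/16$, so the induction hypothesis bounds each by a quarter of the target, and they are absorbed into the constant. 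This induction is not cosmetic; without it the boundary error cannot be controlled. Finally, note that the paper actually works with $\U_p:=\max_{1\leqslant x\leqslant p}\V_x$ and passes to the $\T_x$-form of the statement via $\V_x\leqslant\tfrac12\T_x$; the maximum over $x$ arises because the induction changes the parameter $A$ from step to step, not from localising the support of $\bfc$ as you suggest.
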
 

\subsection{Non vanishing of theta functions}\label{nvth}
  \hspace{\parindent}  The distribution of values of $L$-functions is a deep question in number theory, with  significant consequences for related  arithmetic, algebraic and geometric objects. The main reason comes from the fact that these values, and particularly the central ones, hold much fundamental arithmetical information, as illustrated by the famous conjecture of Birch and Swinnerton-Dyer \cite{BSD1,BSD2}. It is widely believed that these central values should not vanish unless an underlying arithmetic reason forces~it.\par 
   Consider the Dirichlet $L$-functions associated to multiplicative characters $$L(s,\chi):=\sum_{n \geqslant 1} \frac{\chi(n)}{n^s} \hspace{5mm} (\Re e(s) > 1).$$ In this case, no algebraic property imposes the vanishing of $L(\tfrac{1}{2},\chi)$. Therefore it is  expected that this number is non-zero, as firstly conjectured by Chowla \cite{Chowla} for quadratic characters. Using the method of mollification, it was first proved by Balasubramanian and Murty \cite{BalaMurty} that a positive proportion of characters verify $L(\tfrac{1}{2},\chi)\neq0$.  One of the main analytic tools is the study of moments. Various authors have obtained mean value results on the values $L(\tfrac{1}{2},\chi)$.
 
As initiated in previous works \cite{Debrecen,LM,thetalow,thetaupp}, we aim at similar results for moments 
of theta functions $\vartheta (x;\chi )$ associated with Dirichlet characters, as defined by
\begin{equation}
\label{deftheta}
 \vartheta (x;\chi)
=\sum_{n\geqslant 1} \chi (n){\rm e}^{-\pi n^2x/p}\quad(\chi\in X_p^+),
\end{equation}
where 
$X_p^+$ denotes the subgroup of order $\tfrac{1}{2}(p-1)$ of even Dirichlet characters modulo~$p$.
  It was conjectured by Louboutin \cite{efficient} that $\vartheta(1;\chi) \neq 0$ for every non-trivial character modulo a prime\footnote{Pascal Molin informed us that his computations show that $\vartheta(1;\chi) \neq 0$ for $p \leqslant 10^6$.}---see \cite{Zagier} for a case of vanishing in the composite case. Using  evaluations of the second and fourth moments of $\vartheta(1;\chi)$, Louboutin and the second author \cite{LM} showed that $\vartheta (1;\chi)\neq 0$ for at least $p/\log p$ even characters modulo $p$---for odd characters,  
a similar result has already been proved by Louboutin in \cite{CRAS}. Constructing  mollifiers of a different kind than those devised for $L$-functions, we obtain the following improvement. 

\begin{theorem}\label{mainth}
Let $x>0$ be fixed. For suitable constant $c$, sufficiently large prime~$p$ and $q:=\lfloor\sqrt{p/3}\rfloor$, there exists at least $$\gg  \frac{p}{\E_q} \gg \frac{p}{(\log p)^{ \delta}(\log_2p)^6}$$ even characters $\chi$ such that $\vartheta(x;\chi) \neq 0$, where $\delta \approx 0.08607$ is as defined in \eqref{delta}.
\end{theorem}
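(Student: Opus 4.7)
The strategy is the mollification method. Introduce a test function $M(\chi)$ and combine a Cauchy--Schwarz inequality with sharp upper and lower moment estimates; the novelty compared with \cite{LM} is that the mollifier weights are drawn from the near-extremal sequence for the multiplicative-energy problem of Theorem~\ref{multh}. Let $\bfc=(c_n)_{n\leqslant q}$ be non-negative with $\norm{\bfc}_1=1$ and $q^2\E(\bfc;q)\leqslant(1+o(1))\E_q$. Define
\begin{equation*}
M(\chi):=\Bigg(\sum_{n\leqslant q}c_n\chi(n)\Bigg)^2=\sum_{u\leqslant q^2}A(u)\chi(u),\quad A(u):=\sum_{\substack{dt=u\\d,t\leqslant q}}c_d c_t.
\end{equation*}
Then $\sum_u A(u)=1$, $\sum_u A(u)^2=\E(\bfc;q)$, and $A$ is supported in $[1,q^2]\subset[1,p/3]$ by the choice $q=\fl{\sqrt{p/3}}$. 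Writing $N(x,p)$ for the number of $\chi\in X_p^+$ with $\vartheta(x;\chi)\neq 0$, Cauchy--Schwarz yields $|S_1|^2\leqslant N(x,p) S_2$ with $S_1:=\sum_{\chi}M(\chi)\vartheta(x;\chi)$ and $S_2:=\sum_{\chi}|M(\chi)\vartheta(x;\chi)|^2$.

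The first moment is computed via orthogonality of even characters. Expanding $M\vartheta$ as a double sum over pairs $(u,n)$ with $u\leqslant q^2$ and $n\geqslant 1$, the only tuple verifying $un=1$ is $u=n=1$ (here we use $u\leqslant p/3<p$ crucially), contributing the main term $\tfrac12\varphi(p)c_1^2\e^{-\pi x/p}$. All other tuples have $un\geqslant p-1$, forcing $n\geqslant(p-1)/u$, and their total contribution is absorbed by the Gaussian weight $\e^{-\pi n^2x/p}$ together with the mass bound $\sum_u A(u)=1$. After a mild perturbation of $\bfc$ securing $c_1\gg(\log p)^{-O(1)}$, this yields $S_1\gg p\,c_1^2$.

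For the second moment, I write $M(\chi)\vartheta(x;\chi)=\sum_{k\geqslant 1}B_k\chi(k)$ with $B_k:=\sum_{un=k,\,u\leqslant q^2}A(u)\e^{-\pi n^2 x/p}$. Orthogonality of even characters produces $S_2=\tfrac12\varphi(p)\sum_{k\md{\pm k'}{p},\,p\,\nmid\,kk'}B_k B_{k'}$. The off-diagonal $k\neq k'$ is handled via standard arguments exploiting the super-polynomial decay of $B_k$ for $k\gg q\sqrt{p/x}$ together with the sparsity of the congruence. The diagonal $\sum_k B_k^2$ unfolds to
\begin{equation*}
\sum_{u,u'\leqslant q^2}A(u)A(u')\sum_{\substack{n,m\geqslant 1\\ un=u'm}}\e^{-\pi(n^2+m^2)x/p},
\end{equation*}
and Jacobi's transformation evaluates the inner theta-type sum to $\asymp(u,u')\sqrt{p/x}/\sqrt{u^2+u'^2}\ll(u,u')\sqrt{p/x}/(u+u')$, leaving the G\'al-type quantity $W:=\sum_{u,u'\leqslant q^2}A(u)A(u')(u,u')/(u+u')$ as the central arithmetic object to control.

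The key estimate $W\ll\E(\bfc;q)(\log p)^{O(1)}$ is established through the symmetrization $2A(u)A(u')\leqslant A(u)^2+A(u')^2$, the uniform divisor-sum bound $\sum_{u'\leqslant q^2}(u,u')/(u+u')\ll d(u)\log p$, and control of $\sum_u A(u)^2 d(u)$ using the well-behaved divisor profile of the near-optimal $\bfc$. Combining all ingredients yields
\begin{equation*}
N(x,p)\geqslant\frac{|S_1|^2}{S_2}\gg\frac{p^2 c_1^4}{p^{3/2}\E(\bfc;q)(\log p)^{O(1)}}\gg\frac{p\,q^2 c_1^4}{\E_q(\log p)^{O(1)}}\gg\frac{p}{\E_q},
\end{equation*}
via $q^2\asymp p$ and $c_1\gg(\log p)^{-O(1)}$; Theorem~\ref{multh} then delivers the quantitative form. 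The main obstacle is precisely the G\'al-type estimate for $W$: the kernel $(u,u')/(u+u')$ is only weakly concentrated on the diagonal, so replacing the double sum by the $L^2$-quantity $\E(\bfc;q)$ requires a careful accounting of divisor statistics on the support of $A$---this is where the polylog factor $(\log_2 p)^6$ in the final answer originates.
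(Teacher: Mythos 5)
There is a genuine gap, and it is located at the very first step: the choice of mollifier. With $M(\chi)=\bigl(\sum_{n\leqslant q}c_n\chi(n)\bigr)^2=\sum_{u\leqslant q^2}A(u)\chi(u)$ (unconjugated), orthogonality over $X_p^+$ detects $un\md{\pm1}p$, so the only ``diagonal'' term in $S_1$ inside the range is $u=n=1$, and your first moment is merely $S_1\gg p\,c_1^2$ instead of $\gg p\norm\bfc_1$. This cannot be repaired by perturbing $\bfc$: since $\E(\bfc;q)=\sum_u A(u)^2\geqslant A(1)^2=c_1^4$, Cauchy--Schwarz gives at best
\begin{equation*}
N(x,p)\;\geqslant\;\frac{|S_1|^2}{S_2}\;\ll\;\frac{p^2c_1^4}{p^{3/2}\,\E(\bfc;q)}\;=\;\frac{p^{1/2}c_1^4}{\E(\bfc;q)}\;\leqslant\;p^{1/2},
\end{equation*}
so the scheme as designed can never produce more than $\sqrt p$ characters, far short of $p/\E_q$. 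Your final display also contains an algebra slip ($p^2/p^{3/2}=p^{1/2}$, not $p\,q^2/q^2$), and more importantly the two requirements you impose on $\bfc$ are mutually exclusive: $c_1\gg(\log p)^{-O(1)}$ forces $\E(\bfc;q)\geqslant c_1^4\gg(\log p)^{-O(1)}$, whereas near-optimality demands $\E(\bfc;q)\asymp\E_q/q^2\asymp(\log p)^{\delta+o(1)}/p$. The deeper reason the squared mollifier fails is that its coefficients live on $[1,q^2]\subset[1,p/3]$, where the Gaussian factor $\e^{-\pi n^2x/p}$ annihilates any diagonal contribution of size comparable to the total mass $\sum_uA(u)=1$; even replacing $\chi(u)$ by $\overline{\chi(u)}$ does not help, since the diagonal $n=u$ is then weighted by $\e^{-\pi u^2x/p}$ with $u$ as large as $p/3$.

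The paper's argument avoids all of this, and also the delicate second-moment/G\'al-kernel analysis of your quantity $W$ (which you only sketch). It takes a \emph{linear} mollifier $M(\chi)=\sum_{m\leqslant q}c_m\overline{\chi(m)}$, whose coefficients are supported on $m\leqslant q=\fl{\sqrt{p/3}}$ so that $\e^{-\pi m^2x/p}\gg_x1$ there and the mollified first moment captures the full mass, $M_1(p)\gg p\norm\bfc_1$. It then applies H\"older in the form $M_1(p)\leqslant M_2(p)^{1/2}M_4(p)^{1/4}M_0(p)^{1/4}$, where $M_2(p)=\sum_\chi|\vartheta(x;\chi)|^2\ll p^{3/2}$ is the \emph{unmollified} second moment and $M_4(p)=\sum_\chi|M(\chi)|^4=\tfrac12(p-1)\E(\bfc;q)$ exactly, because $mn,m'n'\leqslant q^2\leqslant p/3$ forces $mn=m'n'$ in the orthogonality relation. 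This yields $M_0(p)\gg\norm\bfc_1^4/\E(\bfc;q)\gg q^2/\E_q\asymp p/\E_q$, and Theorem~\ref{multh} concludes. If you want to salvage your approach, the essential change is to keep the mollifier linear (and conjugated) so that the first moment is proportional to $\norm\bfc_1$, and to separate the theta function from the mollifier by H\"older rather than bounding the mollified second moment directly.
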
  

At this point, some methodological comments may be useful. In order to prove that $\vartheta(x;\chi)\neq 0$ for many  $\chi\in X_p^+$, the usual approach rests on evaluating the asymptotic behavior of the moments 
$$S_{2k}^+(p) :=\sum_{\chi\in X_p^+}\vert\vartheta(x;\chi)\vert^{2k}  \hspace{10mm} (k > 0).$$ 

 Lower bounds providing the expected order for the moments are obtained in \cite{thetalow}. Conditionally on GRH, nearly optimal upper bounds are established in \cite{thetaupp}. 
\par 
This strategy is related to recent results by Harper, Nikeghbali and Radziwill in \cite{HarperMaks}, where the asymptotic behavior of moments of a Steinhaus random multiplicative function\footnote{That is a random multiplicative function  whose values at primes are uniformly distributed on the complex unit circle.} is determined. Indeed a Steinhaus function  may be regarded as a random model for $\vartheta(x;\chi)$: on the one hand, the effect of the rapidly decaying factor $\e^{-\pi n^2/p}$ is essentially equivalent to restricting the summation in \eqref{deftheta} to a finite range $n \leqslant N_p$ with $N_p \approx \sqrt{p}$, and, on the other hand, the  behavior of $\chi(n)$ for $n\ll \sqrt{p}$ is similar on average to that of the Steinhaus random function. \par 
As noticed   in \cite{HarperMaks}, an asymptotic formula for the first absolute moment $S_{1}^+(p)$
 would probably imply the existence of a positive proportion of characters such that $\vartheta(x;\chi) \neq 0$. However,   Harper recently announced, in both the random and the deterministic case, that, unexpectedly, the first moment  exhibits  more than square-root cancellation: for $2<p$, $1\leqslant N\leqslant p/3$, we have
\begin{equation}
\label{squareroot}
\frac{1}{p-2} \sum_{\chi \neq \chi_0} \Vbs{\sum_{n\leqslant N} \chi(n)} \ll \frac{\sqrt{N}}{(\log_23\nu)^{1/4}}\end{equation} where $\nu:=\min\left\{N,p/N\right\}$. This estimate shows that the above mentioned strategy would, in any case, fail to detect a positive proportion of ``good'' characters. \par 
 We  undertook another approach, specifically introducing  suitable Dirichlet polynomials as mollifiers, and thereby reducing the problem to the minimizations considered in Section \ref{Intro}.
\par 
In the next section, we also state a lower bound for the first moment~\eqref{squareroot}.

\subsection{Lower bounds for low moments of character sums}\label{lbch}
\hspace{\parindent} We write $f(x)\asymp g(x)$ to mean that $g(x) \ll f(x) \ll g(x)$, in other words, that the inequalities 
$c_0g(x)\leqslant f(x)\leqslant c_1g(x)$ hold for some constants $c_0,\,c_1$ and all $x$ in some specified range. \par 
Let us consider $S(N;\chi)= \sum_{n\leqslant N} \chi(n)$, where $\chi$ is a Dirichlet character modulo a prime $p$. As recalled in the previous section,  Harper, using probabilistic techniques, recently proved Helson's surprising conjecture that the first moment of Steinhaus random multiplicative functions is  $o(\sqrt{N})$, and indeed he showed \cite{Harpelow} that
$$ \mathbb{E}\bigg\vert \sum_{n\leqslant N} f(n)\bigg\vert \asymp \frac{\sqrt{N}}{(\log_2 N)^{1/4}}\cdot$$
 \par 
As for the deterministic case, this approach yields the upper bound \eqref{squareroot} for the first moment of character sums.
Obtaining sharp lower bounds by the probabilistic methods developed in \cite{Harpelow} seems harder.\footnote{Private communication with Adam Harper.} Using Theorem \ref{multh}, we establish the following lower bound for some $L^r$-norms of character sums. \par 
Recall the definition of $\delta$ in \eqref{delta} and note that 
$\delta/2 \approx 0.04304$.
\begin{theorem}\label{firstmomentcarac}
Let $r\in]0,4/3[$ be fixed. For sufficiently large $p$, all $N\in\big[1,\tfrac12p\big[$ and $\nu:=\min(N,p/N)$, we have  
\begin{equation}
\frac{1}{p-2} \sum_{\chi \neq \chi_0} \left\vert S(N;\chi)\right\vert^r \gg \frac{N^{r/2}}{\E_\nu^{1-r/2}}.
 \end{equation}In particular,  
$$
\frac{1}{p-2} \sum_{\chi \neq \chi_0} \left\vert S(N;\chi)\right\vert \gg \sqrt{\frac{N}{\E_\nu}} \gg \frac{\sqrt{N}}{(\log \nu)^{ \delta /2}(\log_2\nu)^3}.$$
\end{theorem}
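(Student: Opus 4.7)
The plan is to apply H\"older's inequality to a character sum mollified by a Dirichlet polynomial of length~$\nu$. Fix $r\in\,]0,4/3[$, set $\nu=\min(N,p/N)$, and choose non-negative weights $\bfc=(c_m)_{m\leqslant\nu}$ with $\norm\bfc_1=1$ realizing (up to a multiplicative constant) the infimum~$\E_\nu$. Define the mollifier $A(\chi):=\sum_{m\leqslant\nu} c_m\chi(m)$. I treat the two subcases $\nu=N$ and $\nu=p/N$ in parallel, the second obtained from the first via Polya--Fourier duality.

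In the range $\nu=N\leqslant\sqrt p$, the orthogonality of characters modulo~$p$, combined with the fact that $n\equiv m\pmod p$ forces $n=m$ when $0<n,m<p$, yields $\sum_{\chi} S(N;\chi)\overline{A(\chi)}=(p-1)\norm\bfc_1=p-1$. Subtracting the principal contribution $S(N;\chi_0)A(\chi_0)=N\ll p$, one obtains $\bigl|\sum_{\chi\neq\chi_0}S(N;\chi)\overline{A(\chi)}\bigr|\gg p$. I now apply H\"older's inequality with the three exponents
$$s_1:=2(2-r),\quad s_2:=4,\quad s_3:=\frac{4(2-r)}{4-3r},$$
which are positive for $r\in\,]0,4/3[$ and satisfy both $1/s_1+1/s_2+1/s_3=1$ and $r/s_1+2/s_3=1$, to the decomposition $|S\bar A|=|S|^{r/s_1}\cdot|A|\cdot|S|^{2/s_3}$. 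This gives
$$p\ll \Big(\sum_\chi|S|^r\Big)^{1/s_1}\Big(\sum_\chi|A|^4\Big)^{1/4}\Big(\sum_\chi|S|^2\Big)^{1/s_3}.$$
Since $\nu^2\leqslant p$, orthogonality again gives $\sum_\chi|A|^4=(p-1)\E(\bfc;\nu)\ll p\E_\nu/\nu^2$, while the classical second-moment bound yields $\sum_\chi|S|^2\ll pN$. Substituting, raising to the $s_1$-th power, and using $s_1/4=1-r/2$, $s_1/2=2-r$, $s_1/s_3=(4-3r)/2$, produces $\sum_\chi|S(N;\chi)|^r\gg pN^{r/2}/\E_\nu^{1-r/2}$.

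In the dual range $N>\sqrt p$, so $\nu=p/N$, I invoke the Gauss sum expansion $S(N;\chi)=\tau(\bar\chi)^{-1}W(\chi)$ with $W(\chi):=\sum_a\chi(a)K_N(a/p)$ and $K_N(t):=\sum_{n=1}^N e(nt)$, whence $|S|^r=p^{-r/2}|W|^r$. For the mollifier I take $\tilde A(\chi):=\sum_{m\leqslant cp/N} c_m\chi(m)$ with a small constant $c>0$, $\bfc$ still realizing $\E_{cp/N}\asymp\E_\nu$. The explicit formula $K_N(m/p)=e^{i\pi(N+1)m/p}\sin(\pi Nm/p)/\sin(\pi m/p)$ shows that for $m\leqslant cp/N$ the phases $e^{i\pi(N+1)m/p}$ lie in a narrow cone and $|K_N(m/p)|\asymp N$, so the analogous orthogonality computation yields $\bigl|\sum_\chi W(\chi)\overline{\tilde A(\chi)}\bigr|=(p-1)\bigl|\sum_m c_mK_N(m/p)\bigr|\gg pN$. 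Running the same H\"older scheme with $\sum_\chi|W|^2=p\sum_\chi|S|^2\ll p^2N$ and $\sum_\chi|\tilde A|^4\ll p\E_\nu/\nu^2$ gives the required bound after substituting $\nu=p/N$. The main obstacle is precisely this phase-coherence step: one must restrict the mollifier to a range of length $\asymp\nu$ (rather than $\nu$ itself) so that the varying phases of $K_N(m/p)$ do not cancel in $\sum_m c_mK_N(m/p)$, using in particular that $\E_{cp/N}\asymp\E_\nu$ so no power is lost. Once this is handled, the three-term H\"older and the orthogonality-based moment computations are routine, and the endpoint $r=4/3$ emerges naturally as the locus where $s_3$ diverges, which is exactly where the scheme ceases to be viable.
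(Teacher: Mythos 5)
Your argument in the principal range $N\leqslant\sqrt p$ is, up to normalization of $\bfc$, the paper's own proof: the same Dirichlet-polynomial mollifier, the same three-exponent H\"older step (your $s_1$ and $s_3$ are exactly the paper's $s=4-2r$ and $t=(8-4r)/(4-3r)$), the same orthogonality evaluations of the cross term, of the second moment $\ll pN$, and of the fourth moment of the mollifier as $(p-1)\E(\bfc;\nu)$, followed by the near-optimal choice of $\bfc$ and Theorem \ref{multh}. In the dual range $\sqrt p<N\leqslant\tfrac12p$ you differ only in presentation: the paper invokes P\'olya's formula \eqref{Polya}, restricts to odd characters and pairs $\pm h$ so that the coefficients become the nonnegative quantities $(2-2\cos(2\pi h/N))/h$, whereas you expand through the Gauss sum, writing $|S(N;\chi)|=p^{-1/2}\bigl|\sum_a\chi(a)K_N(a/p)\bigr|$, and secure positivity by truncating the mollifier to $m\leqslant cp/N$, where $\Re\, K_N(m/p)\asymp N$; both are the same duality idea and both run the identical H\"older/orthogonality scheme afterwards, with your exponent bookkeeping checking out. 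The one point to tidy is your assertion $\E_{cp/N}\asymp\E_{p/N}$: monotonicity of $M\mapsto\E_M$ is not evident from the definition (padding an optimal sequence with zeros gives a comparison in the wrong direction), so strictly your dual-range bound carries $\E_{c\nu}$ in place of $\E_\nu$; by the two-sided estimate \eqref{estEN} this costs at most a factor $(\log_2\nu)^{O(1)}$ and is harmless for the final displayed conclusion, but it deserves an explicit line rather than an ``$\asymp$'' by fiat.
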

\begin{remarks}
(i)
This result can be easily generalized to
composite moduli, but for the sake of simplicity and consistence, we restricted to the case of prime moduli.
\par 
(ii) A preliminary version of Theorem \ref{firstmomentcarac} was stated under the assumption $N\leqslant \sqrt{p}$.  Ping Xi noted that Plya's formula \eqref{Polya} readily enables extending the validity to the full range $N\leqslant \tfrac12p$. Moreover he also noticed that Theorem \ref{firstmomentcarac} still holds if $S(N,\chi)$ is replaced by $$\sum_{n\leqslant N}\varepsilon_n\chi(n)$$ with $|\varepsilon_n|=1$ $(1\leqslant n\leqslant N)$.
\end{remarks}

\par 
The same method may be also applied to lower bounds for 
$$ \frac{1}{T}\int_0^T \Vbs{\sum_{n\leqslant N}n^{it}}^r {\rm d} t.$$ The study of the limit of this mean-value as $T\to\infty$ was initiated by Helson \cite{H06} and further investigated by Bondarenko and Seip in~\cite{BS16}. For any $r \leqslant 1 $, they obtained a lower bound of size $\sqrt{N} (\log N)^{-0.07672}$ and could improve the exponent to $-0.05616$ for $r=1$,  using a  method different from ours. Indeed, their approach relies on  \cite[Lemma $3$]{BS16}, which is not available for character sums. The following result illustrates the relevance of Theorem \ref{multh} in this context.

\begin{theorem}\label{firstmomentpolyzeta}
Let $r\in]0,4/3[$. For $T\geqslant 1$, $1\leqslant N\leqslant \sqrt{T}$,  we have  
\begin{equation} 
 \frac{1}{T}\int_0^T  \Vbs{\sum_{n\leqslant N}n^{it}}^r {\rm d} t\gg \frac{N^{r/2}}{\E_N^{1-r/2}}.
 \end{equation}In particular, 
$$
\lim_{T\to+\infty} \frac{1}{T}\int_0^T  \Vbs{\sum_{n\leqslant N}n^{it}}  \d t \gg \sqrt{\frac{N}{\E_N}} \gg \frac{\sqrt{N}}{(\log N)^{ \delta /2}(\log_2N)^3}.$$ 
\end{theorem}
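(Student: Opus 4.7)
I plan to mirror the proof of Theorem~\ref{firstmomentcarac}, replacing orthogonality of characters by mean-value estimates for Dirichlet polynomials on a long interval. Fix $\bfc\in(\R^+)^N$ with $\norm\bfc_1=1$, introduce the mollifier $M(t):=\sum_{n\leqslant N}c_n n^{it}$, and write $S(t):=\sum_{n\leqslant N}n^{it}$. Under the hypothesis $T\geqslant N^2$, the classical diagonal-plus-off-diagonal estimates of Montgomery--Vaughan type will deliver
\begin{equation*}
\frac{1}{T}\int_0^T S(t)\overline{M(t)}\,\d t = 1+o(1),\quad \frac{1}{T}\int_0^T |S(t)|^2\,\d t\ll N, \quad \frac{1}{T}\int_0^T |M(t)|^4\,\d t\ll \E(\bfc;N).
\end{equation*}
The third bound is the continuous counterpart of the orthogonality computation hidden behind Theorem~\ref{firstmomentcarac}: upon expanding $|M|^4$, the diagonal contribution from quadruples $(a,b,c,d)$ with $ab=cd$ integrates to exactly $T\cdot\E(\bfc;N)$.

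The heart of the argument will be a three-term H\"older inequality tailored to the exponent $r$. For $r\in\,]0,4/3[$, I set
$$p_1:=4-2r,\qquad p_2:=\tfrac{2(4-2r)}{4-3r},\qquad p_3:=4,$$
and check that $1/p_1+1/p_2+1/p_3=1$ and $p_j\geqslant 1$ for each $j$ (the only restrictive case being $p_2\geqslant 1$, which follows from $r<4/3$). The factorisation
$$|S(t)\overline{M(t)}| = |S(t)|^{r/p_1}\cdot|S(t)|^{2/p_2}\cdot|M(t)|^{4/p_3},$$
legitimate since $r/p_1+2/p_2=1$ and $4/p_3=1$, combined with H\"older's inequality produces
\begin{equation*}
\frac{1}{T}\int_0^T|S\overline{M}|\,\d t\leqslant \bigg(\frac{1}{T}\int_0^T|S|^r\,\d t\bigg)^{1/p_1}\bigg(\frac{1}{T}\int_0^T|S|^2\,\d t\bigg)^{1/p_2}\bigg(\frac{1}{T}\int_0^T|M|^4\,\d t\bigg)^{1/p_3}.
\end{equation*}
The bounds of the first paragraph then yield $\frac{1}{T}\int_0^T|S(t)|^r\,\d t\gg N^{-p_1/p_2}\E(\bfc;N)^{-p_1/4}$.

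Since $\bfc$ was arbitrary with $\norm\bfc_1=1$, I take the supremum over $\bfc$, which by definition of $\E_N$ amounts to replacing $\E(\bfc;N)$ by $\E_N/N^2$. A direct arithmetic simplification ($p_1/4=1-r/2$ and $p_1/2-p_1/p_2=r/2$) converts the resulting expression into the announced estimate $\frac{1}{T}\int_0^T|S(t)|^r\,\d t\gg N^{r/2}/\E_N^{1-r/2}$; inserting the upper bound $\E_N\ll(\log N)^\delta(\log_2N)^6$ of Theorem~\ref{multh} then produces the explicit form. The main technical hurdle is securing the sharp $L^4$-bound on $M$: off-diagonal terms in $\frac{1}{T}\int|M|^4\,\d t-\E(\bfc;N)$ carry denominators $|\log(ab/cd)|$ that may be as small as $1/N^2$, so controlling their cumulative contribution by the diagonal $\E(\bfc;N)$ uses both the full strength of $N\leqslant \sqrt T$ and a careful grouping of quadruples with fixed product $ab=cd$, in the spirit of standard fourth-moment bounds for multiplicative Dirichlet polynomials.
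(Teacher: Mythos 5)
Your proposal is correct and follows essentially the route the paper intends: it states that the proof of Theorem~\ref{firstmomentpolyzeta} is analogous to that of Theorem~\ref{firstmomentcarac} and omits the details, and your argument is exactly that analogue, with the same three-factor H\"older exponents $s=4-2r$, $t=2(4-2r)/(4-3r)$, $4$ and with orthogonality of characters replaced by Montgomery--Vaughan type mean values valid since $T\geqslant N^2$. The only remark is that the $L^4$ bound you flag as the main hurdle is in fact immediate from the mean value theorem applied to $M(t)^2=\sum_{\ell\leqslant N^2}r(\ell)\ell^{it}$, whose off-diagonal contribution is $O(N^2/T)\cdot\E(\bfc;N)$.
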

As the proof of  Theorem \ref{firstmomentpolyzeta} is similar to that of Theorem~\ref{firstmomentcarac}, we omit the details.
\begin{remark}
Studying the proofs of \cite{BS16}, one may ask whether their method also provides the same exponent $\delta/2$.\footnote{This was answered positively, without details, in May 2017 by ``Lucia''  on mathoverflow: https://mathoverflow.net/questions/129264/short-character-sums-averaged-on-the-character. (The stated exponent $\delta/4$ apparently stems from an oversight.) As noted by ``Lucia'', the method of \cite{BS16} relies on some input from analysis (\cite[lemma 3]{BS16}) which enables restricting the summation to a level set for the $\Omega$ function, defined in Section \ref{pfth1.1}. However, as will be seen from our proof, it is rather doubtful that this approach may yield the right exponent.} \end{remark}

\section{Proof of Theorem \ref{gcdsumth}}\label{pfth1.1}
\subsection{A lemma on the distribution of prime factors}
Let $\Omega(n)$ (resp. $\omega(n)$) denote the total number of prime factors, counted with (resp. without) multiplicity, of an integer $n$. For $k\geqslant 1$, $x\geqslant 1$, let $N_k(x)$ denote the number of those integers $n\leqslant x$ such that $\Omega(n)=k$. Defining $\Omega(n,t):=\sum_{p^\nu\|n,\,p\leqslant t}\nu$, we also consider the number $F_k(x;C)$ of those integers $n\leqslant x$ counted by $N_k(x)$ and such that
\begin{equation}
\label{loc}
\Omega(n,t)\leqslant \kappa\log_23t+C\quad(1\leqslant t\leqslant x)
\end{equation}
with $\kappa:=k/\log_2x$.
\begin{lemma} Let $\kappa_0\in]0,2[$. For $0\leqslant \kappa\leqslant \kappa_0$ and suitable $C=C(\kappa_0)$, we have
\begin{equation}
\label{minFk}
F_k(x;C)\asymp\frac{N_k(x)}{k}\quad(x\geqslant 3).
\end{equation}
\end{lemma}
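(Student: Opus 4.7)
The result has the flavour of a ballot problem applied to the step function $t\mapsto\Omega(n,t)$: when $\Omega(n)=k$, its typical trajectory hugs the mean curve $\kappa\log_2 3t$, and \eqref{loc} forces it never to exceed that curve by more than $C$. By a cycle-lemma heuristic, this should happen for a proportion $\asymp 1/k$ of integers with $\Omega(n)=k$, giving the target $F_k(x;C)\asymp N_k(x)/k$.

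The first step is a discrete reformulation. Writing the prime factorisation $n=p_1p_2\cdots p_k$ with $p_1\leqslant p_2\leqslant\cdots\leqslant p_k$ and evaluating \eqref{loc} just below each $p_j$ shows, up to enlarging $C$ by an absolute constant to absorb the $\log 3$ and the contribution of repeated primes, that \eqref{loc} is equivalent to the $k$ inequalities
\[
j-1\leqslant\kappa\log_2(3p_j)+C\qquad(1\leqslant j\leqslant k),
\]
i.e.\ to $p_j\geqslant y_j:=\exp\exp\{(j-1-C)_+/\kappa\}$ for every $j$. Thus $F_k(x;C)$ counts those $n\leqslant x$ with $\Omega(n)=k$ whose $j$-th smallest prime factor is at least $y_j$.

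For the lower bound, I would combine the uniform Sathe--Selberg asymptotic
\[
N_k(x)\asymp \frac{x(\log_2x)^{k-1}}{(k-1)!\,\log x}\qquad(0<\kappa\leqslant\kappa_0<2),
\]
with a volume computation over the simplex of sorted logarithms $\{(u_1,\ldots,u_k):0<u_1<\cdots<u_k,\ \sum u_j\leqslant\log x\}$ equipped with the ``prime density'' $\prod_j e^{u_j}/u_j$: the ballot constraints $u_j\geqslant\log y_j$ cut out a sub-region whose integral is of order $1/k$ times the total. This is the multiplicative analogue of the classical fact that, among $k$ i.i.d.\ uniform points on $[0,1]$, the order statistics satisfy $v_{(j)}\geqslant(j-1)/k$ for all $j$ with probability exactly $1/k$. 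It can be made rigorous either by a cyclic parameterisation of the $k$-tuples combined with a Laplace analysis at $\sum u_j=\log x$, or directly by an inductive application of Mertens' formula peeling off $p_k,p_{k-1},\ldots,p_1$ in turn, each sum $\sum_{y_j\leqslant p_j\leqslant Y_j}1/p_j$ producing a factor $\asymp\log_2Y_j-\log_2y_j$ whose cumulative gain over the unrestricted count is precisely of order $k$. The matching upper bound $F_k(x;C)\ll N_k(x)/k$ is obtained by running the same analysis with opposite inequalities.

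The main obstacle is to carry the ballot/volume computation uniformly in $\kappa\in[0,\kappa_0]$ with $\kappa_0<2$: the Sathe--Selberg expansion remains valid in this range but with constants depending on $\kappa_0$, and the Laplace approximation must be controlled explicitly in $k$. A secondary technicality is to treat the contribution of integers with repeated prime factors (negligible for $\kappa$ bounded away from $2$) and the boundary behaviour of \eqref{loc} at small $t$, where $\kappa\log_2 3t+C$ may be negative; the latter is absorbed into the $(\cdot)_+$ appearing in the definition of $y_j$ and by choosing $C=C(\kappa_0)$ sufficiently large.
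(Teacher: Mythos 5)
Your ballot-problem/Smirnov-statistics intuition is exactly the right one: the condition \eqref{loc} forces the increasing step function $t\mapsto\Omega(n,t)$ to stay below a line of slope $\kappa$ (in the $\log_2 t$ variable), and this should occur for a proportion $\asymp 1/k$ of integers with $\Omega(n)=k$. This is, however, precisely the content of Ford's theorem \cite{Fo07} (``Generalized Smirnov statistics and the distribution of prime factors''), which the paper cites directly rather than reproves. The paper's proof is therefore essentially one paragraph long: the lower bound is inherited verbatim from \cite[Th.~1]{Fo07} (Ford's lower-bound construction already uses squarefree integers, so it applies to the $\Omega$-version without change), and the only genuinely new step is the upper bound with $\Omega$ in place of $\omega$.

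The gap in your proposal is that you stop at the plan. You correctly identify the two hard points --- carrying the volume/Laplace computation uniformly in $\kappa\in[0,\kappa_0]$ and in $k$, and treating repeated prime factors --- and then declare them ``obstacles'' and ``secondary technicalities'' without resolving either. The first is the entire substance of Ford's paper; writing ``it can be made rigorous either by a cyclic parameterisation \dots or by an inductive application of Mertens' formula'' is a description of two research programmes, not an argument. The second is where the paper does its actual work: it writes $n=ab$ with $a$ squarefull, $b$ squarefree and coprime to $a$, $\omega(b)=\ell\leqslant k$, discards $a>(\log x)^3$ by a standard tail estimate, applies Ford's squarefree bound to $b$ with the parameter $\ell$, and then controls the sum over $\ell$ via
\[
G_\ell:=\sum_{\substack{a\ \text{squarefull}\\ \Omega(a)=k-\ell}}\frac1a\ll\frac1{2^{k-\ell}},
\]
which makes the $\ell<k$ terms geometrically small and recovers the factor $1/k$. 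Your sketch does not contain this decomposition, and ``negligible for $\kappa$ bounded away from $2$'' is not a proof of it. In short: right heuristic, but the proposal neither invokes the available black box nor supplies a substitute for it, and the one elementary step the paper actually proves (the $\omega\to\Omega$ upgrade) is left unaddressed.
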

\begin{proof} This  is stated in \cite[Th.~1]{Fo07} in the case prime factors are counted without multiplicity. Since the proof of the lower bound given in this work is actually set up for squarefree integers, the lower bound included in \eqref{minFk} trivially follows. 
\par 
For the upper bound, we still appeal to \cite[Th.~1]{Fo07}, with some adjustment. 
 Consider an integer $n$ counted by $F_k(x;C)$. Then $n$ may be represented uniquely as $n=ab$, where $a$ is squarefull, $b$ is squarefree, $(a,b)=1$, and $\omega(b)=\ell\leqslant k$. Since $a$ is squarefull, a standard argument enables us to discard those $n$ such that $a>(\log x)^3$, say.
Ford's estimate  quoted above implies that the number of squarefree $b$ not exceeding $x/a$, verifying \eqref{loc}  and such that $\omega(b)=\ell$ is
$$\ll \frac{(k-\ell+1)x(\log_2x)^{\ell-1} }{ a\ell !  \log x}\cdot$$
Letting $a$ denoting generically a squarefull integer, this yields
$$F_k(x;C)\ll \frac{x}{\log x}\sum_{\ell\leqslant k}\frac{(k-\ell+1)(\log_2x)^{\ell-1}G_\ell}{ \ell !}, $$
with 
\begin{align*}
G_\ell&:=\sum_{\substack{a\geqslant 1\\ \Omega(a)=k-\ell}}\frac{1}{a}\leqslant \sum_{j\geqslant 0}\frac1{2^{j}}\sum_{\substack{a\geqslant 1,\, 2\,\nmid\,a\\ \Omega(a)=k-\ell-j}}\frac{1}{a}\\
& \leqslant \sum_{0\leqslant j\leqslant k-\ell}\frac1{2^{j}}\sum_{a\geqslant 1,\, 2\,\nmid\,a}\frac{(5/2)^{\Omega(a)-k+\ell+j}}{a}\ll\frac1{2^{k-\ell}}\cdot
\end{align*}
Thus
\begin{align*}F_k(x;C)&\ll  N_k(x) \sum_{\ell\leqslant k} \frac{(k-\ell+1)(\log_2x)^{\ell-k}(k-1)!}{ \ell !2^{k-\ell}} 
\cr&\ll \frac{N_k(x)}{k}\sum_{\ell\leqslant k}(k-\ell+1) \Big(\frac{k}{2\log_2x}\Big)^{k-\ell }  \ll \frac{N_k(x)}{k}\cdot
\end{align*}
\vskip-7mm 
\end{proof}
\subsection{Completion of the proof}
Let us start by proving the lower bound estimate for $\V_N$. The difficulty  partly stems from the hybrid feature of the  definition of $\V(\bfc;N)$, which mingles the  additive and multiplicative structures. \par 
Using an integral representation for $1/(m+n)$ from the theory of the Euler Gamma function (see, e.g., \cite{TW14}, Exercise 154), which also coincides with the computation of the  Fourier transform of $1/\cosh\pi\tau$, we have
\begin{equation}
\label{repintV}
\begin{aligned}
\V(\bfc;N)&=\sum_{d\leqslant N}\frac{\varphi(d)}d\sum_{m,n\leqslant N/d}\frac{c_{md}c_{nd}}{ m+n}\\&=\sum_{d\leqslant N}\frac{\varphi(d)}{ 2d}\int_\R\sum_{m,n\leqslant N/d}\frac{c_{md}c_{nd}}{\sqrt{mn}(m/n)^{i\tau}}\frac{\dd \tau}{\cosh \pi\tau}\\&=\int_\R\sum_{d\leqslant N}\frac{\varphi(d)}{2d}|x_d(\tau)|^2\frac{\dd\tau}{\cosh\pi\tau},
\end{aligned}
\end{equation} 
where we have put
$$x_d(\tau):=\sum_{m\leqslant N/d}\frac{c_{md}}{m^{1/2+i\tau}}\cdot$$
\par 
 Let $y\in]0,1]$. With $s:=\tfrac12+i\tau$, $h:=\1*y^{\Omega}\geqslant (1+y)^\omega$, we may write 
$$S(\tau):=\sum_{d\leqslant N}y^{\Omega(d)}\frac{x_d(\tau)}{d^s}=\sum_{n\leqslant N}\frac{c_nh(n)}{n^s}, $$
whence, by the Cauchy-Schwarz inequality,
$$\Vbs{\sum_{n\leqslant N}\frac{c_nh(n)}{n^s}}^2\leqslant \sum_{d\leqslant N}\frac{\varphi(d)}{d}|x_d(\tau)|^2\sum_{d\leqslant N}\frac{y^{2\Omega(d)}}{\varphi(d)}, $$
and, inserting into \eqref{repintV},
\begin{equation}
\label{minV-1}
\sum_{m,n\leqslant N}\frac{c_mc_n(1+y)^{\omega(m)+\omega(n)}}{m+n}\ll \V(\bfc;N)(\log N)^{y^2}.
\end{equation}
\par Recall the definition of $\beta$ above the statement of Theorem \ref{gcdsumth},
define $$\D^+(\beta):=\{n\leqslant N:\omega(n)\geqslant \beta\log_2N\}, \quad\D^-(\beta):=[1,N]\sset\D^+(\beta)$$ and consider $\bfc\in(\R^+)^N$ such that $\norm\bfc_1=1$. If
\begin{equation}
\label{Dgrand}
\sum_{n\in\D^+(\beta)}c_n\geqslant \tfrac12,
\end{equation}
then, by restricting the summation to $m,n\in\D^+(\beta)$, we get that the left-hand side of  \eqref{minV-1} is $\gg(\log N)^{2\beta\log (1+y)}/N$. The optimal value of the subsequent lower bound for $\V(\bfc;N)$ being achieved for  $y=y(\beta)$, it follows that
\begin{equation}
\label{minVf}
N\V(\bfc;N)\gg(\log N)^{f(\beta)}\cdot
\end{equation}
\par 
If \eqref{Dgrand} does not hold, we define $\sigma_s(n,\beta):=\sum_{d|n,\,d\in\D^-(\beta)}d^s$ and note that
\begin{equation*}
\sum_{d\in\D^-(\beta)}x_d(\tau)=\sum_{n\leqslant N}\frac{c_n}{ n^s}\sigma_s(n,\beta),
\end{equation*}
whence
\begin{equation}
\label{CSD-}
\Vbs{\sum_{n\leqslant N}\frac{c_n}{n^s}\sigma_s(n,\beta)}^2\leqslant \sum_{d\in\D^-(\beta)}\frac{d}{\varphi(d)}\sum_{d\in\D^-(\beta)}\frac{\varphi(d)}{d}|x_d(\tau)|^2.
\end{equation}
By \cite[th. II.6.5]{Tencourse}, the first $d$-sum above is $\asymp N/\{(\log N)^{Q(\beta)}\sqrt{\log_2N}\}$. Inserting \eqref{CSD-} back into \eqref{repintV}, we get
$$\sum_{m,n\leqslant N}c_mc_n\int_\R\sum_{\substack{t|m,\,t\in\D^-(\beta)\\ d|n,\,d\in\D^-(\beta)}}\Big(\frac{dm}{tn}\Big)^{1/2+i\tau}\frac{\dd\tau}{\cosh\pi\tau}\ll\frac{N\V(\bfc;N)}{(\log N)^{Q(\beta)}\sqrt{\log_2N}}.$$
Restrict the outer summation in $m,n$ to $\D^-(\beta)$. Then the inner sum always contain $t=m$, $d=n$. Since the Fourier transform of $1/\cosh\pi\tau$ is positive, we obtain
\begin{equation}
\label{minVQ}
N\V(\bfc;N)\gg(\log N)^{Q(\beta)}\sqrt{\log_2N}.
\end{equation}
Taking \eqref{minVf} into account, we reach the required lower bound.
\par
 Let us now turn our attention to proving the upper bound for $\T_N$ claimed in  \eqref{encVT}.
With $k:=\fl{\beta\log_2N}$, 
we select $n\mapsto c_n$ as the indicator of the set of those integers $n\in[1,N]$ such that $\Omega(n)=k$ and \eqref{loc} holds for  $\kappa=\beta$ and $C=C(\beta)$. Therefore we  have (see, e.g. \cite[th. II.6.5]{Tencourse})
\begin{equation}
\label{c1}
\norm\bfc_1:=\sum_{n\leqslant N}c_n\asymp \frac{N}{ (\log N)^{Q(\beta)}(\log_2N)^{3/2}}.
\end{equation}
Next,
\begin{equation}
\label{xd}
\T(\bfc;N)=\sum_{d\leqslant N}\frac{\varphi(d)}{d}x_d^2,\quad\hbox{with}\quad x_d:=\sum_{m\leqslant N/d}\frac{c_{md}}{\sqrt{m}}\cdot
\end{equation}
Let $\gamma\in]0,1[$ be an absolute constant. For all $y,z\in]\gamma,1]$, we invoke \eqref{loc} to bound $c_{md}$ from above by
$$\begin{cases}
\dsp y^{\Omega(md)-k}z^{\Omega(md,d)-\beta\log_2(2d)-C}\leqslant \frac{\gamma^{-C}y^{\Omega(md)}z^{\Omega(md,d)}}{(\log N)^{\beta\log y}(\log 2d)^{\beta\log z}}& \text{ if } d\leqslant \sqrt{N},\\
\dsp y^{\Omega(md)-k}z^{\Omega(md,N/d)-\beta\log_2(2N/d)-C}\leqslant \frac{\gamma^{-C}y^{\Omega(md)}z^{\Omega(md,N/d)}}{(\log N)^{\beta\log y}(\log 2N/d)^{\beta\log z}}& \text{ if } \sqrt{N}< d\leqslant N .
\end{cases}$$
Thus  
$$x_d\leqslant \frac{U(d,N)}{\gamma^{C}(\log N)^{\beta\log y}}\qquad (1\leqslant d\leqslant N),$$
with 
$$U(d,N):=
\begin{cases}
\dsp
\sum_{m\leqslant N/d}\frac{y^{\Omega(md)}z^{\Omega(md,d)}}{\sqrt{m} (\log 2d)^{\beta\log z}}& \text{ if } d\leqslant \sqrt{N},\\
\dsp\sum_{m\leqslant N/d}\frac{y^{\Omega(md)}z^{\Omega(md,N/d)}}{\sqrt{m}(\log 2N/d)^{\beta\log z}} & \text{ if } \sqrt{N}<d\leqslant N.
\end{cases}
$$
Evaluating the $m$-sums by means of standard estimates such as \cite[th. III.3.7]{Tencourse}, we get
$$x_d\ll\sqrt{\frac{N}{d}}\frac{y^{\Omega(d)}M(d,N)}{(\log N)^{\beta\log y}}$$
with 
$$M(d,N)\ll
\begin{cases}
z^{\Omega(d)}(\log 2d)^{yz-y-\beta\log z}(\log N)^{y-1}& \text{ if }d\leqslant \sqrt{N},\\
\noalign{\vskip-3mm}\\
 z^{\Omega(d,N/d)}(\log N/d)^{yz-1-\beta\log z}& \text{ if } \sqrt{N}<d\leqslant N.
\end{cases}
$$
It follows that
\begin{equation}
\label{majT}
\T(\bfc;N)\ll N\big\{S_1+S_2\big\}
\end{equation}
with
\begin{equation*}
\begin{aligned}
S_1&:=(\log N)^{2(y-1-\beta\log y)}\sum_{d\leqslant \sqrt{N}}\frac{(yz)^{2\Omega(d)}(\log 2d)^{2(yz-y-\beta\log z)}}{ d}\ll(\log N)^{\lambda}\\
S_2&:=(\log N)^{-2\beta\log y}\sum_{\sqrt{N}<d\leqslant N}\frac{y^{2\Omega(d)}z^{2\Omega(d,N/d)}}{ d}\Big(\log \frac{2N}{ d}\Big)^{2(yz-1-\beta\log z)}\\
&\ll(\log N)^\mu,
\end{aligned}
\end{equation*}
where we have put
\begin{equation*}
\begin{aligned}
&\lambda:=2(y-1-\beta\log y)+\big(y^2z^2+2yz-2y-2\beta\log z\big)^+,\cr 
&\mu:=y^2z^2-1-2\beta\log y+(2yz-1-2\beta\log z)^+,
\end{aligned}
\end{equation*}
provided the two quantities taken in positive value do not vanish.
Selecting $y=\beta$, $z=y(\beta)/\beta=1/\{1+y(\beta)\}$, we get
\begin{equation*}
\begin{aligned}
\lambda&=-2Q(\beta)+y(\beta)^2+2y(\beta)-2\beta+2\beta\log\{1+y(\beta)\}=f(\beta)-2Q(\beta)\\
\mu&=y(\beta)^2-2-2\beta\log \beta+2y(\beta)+2\beta\log \{1+y(\beta)\}\cr&=2\beta-y(\beta)^2-2-2\beta\log \beta+2\beta\log \{1+y(\beta)\}=f(\beta)-2Q(\beta)=\lambda.
\end{aligned}
\end{equation*}
Inserting back into \eqref{majT} and taking \eqref{c1} into account, we obtain the stated estimate.\par 
\smallskip
\begin{remark} Due to the multiplicative nature  of the definition of $\T_N$, deriving the lower bound is simpler and more  natural than for $\V_N.$ Let us provide some details in the perspective that a direct, possibly elementary approach  enables one to bound $\T_N/\V_N$ from above efficiently.\par 
Let $\beta,\,y\in]0,1]$ and $\bfc\in(\R^+)^N$ be such that $\norm\bfc_1=1$.  With $x_d$ as defined in \eqref{xd}, we have
$$S:=\sum_{d\leqslant N}y^{\Omega(d)}\frac{x_d}{\sqrt{d}}=\sum_{n\leqslant N}\frac{c_n}{\sqrt{n}}\sum_{d|n}y^{\Omega(d)}\geqslant \sum_{n\leqslant N}\frac{c_n(1+y)^{\omega(n)}}{\sqrt{n}}.$$ 
If \eqref{Dgrand} holds, then $S\gg(\log N)^{\beta\log (1+y)}/\sqrt{N}$, and so
$$\frac{(\log N)^{2\beta\log (1+y)}}{N}\ll S^2\ll \sum_{d\leqslant N}\frac{\varphi(d)}{d}x_d^2\sum_{d\leqslant N}\frac{y^{2\Omega(d)}}{\varphi(d)}\ll \T(\bfc;N)(\log N)^{y^2},$$
from which, as previously,
$$N\T(\bfc;N)\gg(\log N)^{f(\beta)}.$$\par 
If now we assume that \eqref{Dgrand} does not hold, then  
$$\sum_{d\in\D^-(\beta)}x_d=\sum_{n\leqslant N}\frac{c_n}{\sqrt{n}}\sum_{d|n,\,d\in\D^-(\beta)}\sqrt{d}\geqslant \sum_{n\in\D^-(\beta)}c_n\gg1.$$
Therefore
$$1\ll\sum_{d\in\D^-(\beta)}\frac{d}{\varphi(d)}\sum_{d\leqslant N}\frac{\varphi(d)}{d}x_d^2\ll \frac{\T(\bfc;N)N}{(\log N)^{Q(\beta)}\sqrt{\log_2N}}.$$
\par 
In view of the definition of $\eta$, we get in all circumstances
$\T_N\gg{(\log N)^\eta}.$
\end{remark}

\section{Proof of Theorem \ref{multh}}
The lower bound immediately follows from the Cauchy-Schwarz inequality. Indeed, defining
 $$r(n):=\sum_{\substack{dt=n\\ d,t\leqslant N}}c_dc_t,$$
 we have, under the assumption $\norm\bfc_1=1$, that
$$1=\bigg(\sum_{n\leqslant N^2}r(n)\bigg)^2\leqslant H(N)\sum_{n\leqslant N^2}r(n)^2=H(N)\E(\bfc;N)$$
where $H(N)$ is the number of those integers not exceeding $ N^2$ that are products of two integers $\leqslant N$. By
 \cite[cor.~3]{Ford}, we have $$H(N)\ll N^2/\big\{(\log N)^\delta(\log_2N)^{3/2}\big\}.$$ The lower bound contained in~\eqref{estEN} follows.\par \goodbreak
To establish the upper bound, select $n\mapsto c_n$ as the indicator function of the set of those integers $n\in]N/2,N]$ satisfying \eqref{loc} with now $\kappa:=1/\log 4$, so that, by the corresponding version of \eqref{c1},
\begin{equation}
\label{vmr}
\norm\bfc_1^2=\sum_{m\geqslant 1}r(m)\gg\frac{N^2}{ (\log N)^\delta(\log_2N)^3}.
\end{equation}
Moreover, for any integer $m\leqslant N^2$, we have $r(m)\leqslant \sum_{d|m, N/2<d\leqslant N}1$. Observe that $r(m)>0$ implies that $m$ satisfies \eqref{loc} with $\kappa=1/\log 2$ and suitable $C$. We may therefore apply inequality (2.46) from \cite{HT88}, which yields, since $Q(\kappa)=\delta$,
$$\sum_{n\leqslant N^2}r(n)^2\ll\frac{N^2}{ (\log N)^\delta},$$
provided $O(1)$ is substituted for $L(\log v)$  in the proof, which induces no perturbation. 
Considering \eqref{vmr}, we get the required estimate.

\section{Logarithmic improvement of Burgess' bound}

\subsection{Preliminary results}
\hspace{\parindent} 
The following result is a consequence of the Weil bounds for complete character sums, see for instance~\cite[Lemma~12.8]{IK}.
\begin{lemma}
\label{moments}
Let $r \geqslant 2$ be an integer, $B\geqslant 1$,  $p$ a prime and let $\chi$ a non principal Dirichlet character  modulo $p$. We have
\begin{equation}
\label{majmom}
\sum_{1\leqslant \ell\leqslant p}\Vbs{\sum_{1\leqslant b \leqslant B}\chi(\ell+b)}^{2r}\leqslant  (2r)^r B^{r}p + 2rB^{2r}\sqrt{p}.
\end{equation}
\end{lemma}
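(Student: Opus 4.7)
The plan is a standard moment expansion combined with Weil's bound. Using $|z|^{2r}=z^r\overline{z}^r$ and the complete multiplicativity of $\chi$ on $(\mathbb{Z}/p\mathbb{Z})^*$, the first step is to open the $2r$-th power and interchange summations:
\begin{equation*}
\sum_{\ell=1}^{p}\Vbs{\sum_{1\leqslant b\leqslant B}\chi(\ell+b)}^{2r}
=\sum_{\mathbf{b},\mathbf{b}'\in[1,B]^r}\sum_{\ell=1}^{p}\chi\bigg(\frac{\prod_{i=1}^{r}(\ell+b_i)}{\prod_{j=1}^{r}(\ell+b'_j)}\bigg),
\end{equation*}
with the convention that terms for which a factor vanishes modulo $p$ contribute $0$.

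Next I would split the outer sum according to whether the multisets $\{b_1,\dots,b_r\}$ and $\{b'_1,\dots,b'_r\}$ coincide. On the \emph{diagonal} these multisets agree, so the rational function in the argument of $\chi$ is identically $1$ and the inner $\ell$-sum is at most $p$ in absolute value. The number of such ordered $2r$-tuples is at most $r!\,B^r\leqslant (2r)^rB^r$, obtained by choosing the first $r$-tuple freely (in $B^r$ ways) and then at most $r!$ permutations for the second. This yields the first term $(2r)^rB^rp$ in \eqref{majmom}.

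In the \emph{off-diagonal} regime, after cancelling matching linear factors modulo $p$ the rational function retains at least one simple zero or pole, so it is not of the form $c\,g(\ell)^d$ where $d$ is the order of $\chi$. The Weil bound, in the form of \cite[Lemma~12.8]{IK}, then furnishes an estimate of the shape $|\sum_{\ell}\chi(\cdot)|\leqslant 2r\sqrt{p}$, uniformly over such tuples. Combining with the trivial count $B^{2r}$ for the number of off-diagonal tuples produces the second term $2rB^{2r}\sqrt{p}$, and adding the two regimes yields \eqref{majmom}.

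The only point requiring genuine care is verifying the non-degeneracy hypothesis of Weil uniformly over the off-diagonal tuples, i.e. that the surviving rational function in $\mathbb{F}_p(\ell)$ is not a perfect $d$-th power times a constant. This relies on $\mathbb{F}_p[\ell]$ being a UFD, and hence on the primality of $p$; it is the sole substantive step beyond book-keeping and standard multinomial estimates.
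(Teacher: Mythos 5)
Your expansion and overall strategy are the standard ones (the paper itself gives no proof and simply quotes \cite[Lemma~12.8]{IK}), but your dichotomy ``multisets equal vs.\ not equal'' is not the correct notion of degeneracy, and the step where you verify the hypothesis of Weil's bound fails. Let $d$ denote the order of $\chi$. The complete sum over $\ell$ is $\sum_\ell\chi(F(\ell))$ with $F$ the rational function $\prod_i(\ell+b_i)/\prod_j(\ell+b'_j)$, and Weil only saves when $F$ is not a constant times a $d$-th power in $\overline{\mathbb F}_p(\ell)$. This can fail for tuples whose multisets are distinct: take $\chi$ quadratic ($d=2$), $r=2$, $b=(1,1)$, $b'=(2,2)$. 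Then $\chi\big((\ell+1)^2/(\ell+2)^2\big)=1$ for all but $O(1)$ values of $\ell$, so this single ``off-diagonal'' term contributes $p+O(1)$, whereas your argument would bound it by $2r\sqrt p$; for small $B$ (say $B=2$) it alone exceeds $2rB^{2r}\sqrt p$ when $p$ is large. The specific sentence ``after cancelling matching linear factors the rational function retains at least one simple zero or pole'' is false: cancelling common factors does not force the surviving zeros and poles to be simple (in the example there are none to cancel, and all multiplicities are $2$), and simplicity is in any case not what Weil needs --- the relevant condition is that some signed multiplicity is $\not\equiv0\ (\mathrm{mod}\ d)$.

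The repair is the classification actually used in \cite[Lemma~12.8]{IK}: call a tuple $(b_1,\dots,b_r,b'_1,\dots,b'_r)$ degenerate when, for every value $a$, the number of $i$ with $b_i=a$ minus the number of $j$ with $b'_j=a$ is divisible by $d$; Weil's bound $\leqslant 2r\sqrt p$ applies exactly to the non-degenerate tuples, of which there are at most $B^{2r}$, giving the second term of \eqref{majmom}. Since $\chi$ is non principal, $d\geqslant 2$, so any value occurring exactly once among the $2r$ entries already makes the tuple non-degenerate; hence the degenerate tuples are contained in the set of $2r$-tuples from $[1,B]$ in which every value occurs at least twice. Such a tuple has at most $r$ distinct values, and the usual partition count bounds their number by $(2r)^rB^r$; together with the trivial bound $p$ for each such $\ell$-sum this yields the first term. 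Your diagonal family (count $\leqslant r!\,B^r$) is a proper subset of the degenerate family and cannot absorb terms like the quadratic example above, so as written the proof has a genuine gap, although it is repaired by replacing your dichotomy with the one just described.
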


For $A\geqslant 1$, $\bfc\in\R^A$, $M\geqslant 1$, $N\geqslant 1$, $\ell\geqslant 1$,  define 
\begin{equation}
\label{defrT}
r(\ell;\bfc):=\sum_{M<m\leqslant M+N} \sum_{a\ell\md mp}c_a,\quad R(\bfc;A,M,N):=\sum_{\ell\geqslant 1}r(\ell;\bfc)^2.
\end{equation}  
The following easy lemma provides an upper bound for the latter quantity.

\begin{lemma}\label{abcd}
For prime $p$, integers $A$, $M$, $N$,  such that 
\begin{equation}\label{bornes} A \leqslant N, \hspace{3mm} AN \leqslant p,  \end{equation}  
and all $\bfc\in(\R^+)^N$, we have 
\begin{equation}
\label{majRc}
 R(\bfc;A,M,N) \leqslant  \norm\bfc_1^2+2N\V(\bfc;A)  .
 \end{equation}
 \end{lemma}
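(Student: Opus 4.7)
The plan is to expand the square $r(\ell;\bfc)^2$ and swap the summation order. Setting $I:=(M,M+N]$, one sees that $r(\ell;\bfc)=\sum_{1\leqslant a\leqslant A}c_a\mathbf{1}[a\ell\bmod p\in I]$; the hypotheses $AN\leqslant p$ and $A\leqslant N$ force $A\leqslant \sqrt p<p$, so $(a,p)=1$ for every $a\leqslant A$. Squaring, interchanging orders, and using the bijection $\ell\leftrightarrow(m_1,m_2):=(a_1\ell\bmod p,\,a_2\ell\bmod p)$ available for each fixed $(a_1,a_2)$, one obtains
$$R(\bfc;A,M,N)=\sum_{1\leqslant a_1,a_2\leqslant A}c_{a_1}c_{a_2}\,N(a_1,a_2),$$
with $N(a_1,a_2):=\#\{(m_1,m_2)\in I^2:a_2m_1\equiv a_1m_2\pmod p\}$.

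I would next split into diagonal and off-diagonal. The diagonal $a_1=a_2=a$ gives $N(a,a)=N$, since $|I|=N\leqslant p$ and $(a,p)=1$ force $m_1=m_2$; the total contribution $N\sum_a c_a^2$ is precisely the diagonal part of $2N\V(\bfc;A)$. For $a_1\neq a_2$, write $d:=(a_1,a_2)$ and $a_i=da_i'$ with $(a_1',a_2')=1$. The congruence reduces to $a_2'm_1\equiv a_1'm_2\pmod p$, so $(m_1,m_2)$ belongs to a sublattice of $\mathbb{Z}^2$ of covolume $p$ containing the primitive vector $(a_1',a_2')$. Counting lattice points in $I^2$ line by line, each line $a_2'm_1-a_1'm_2=kp$ supports an arithmetic progression with step $(a_1',a_2')$ containing at most $N/\max(a_1',a_2')+1\leqslant 1+2N(a_1,a_2)/(a_1+a_2)$ integer points; meanwhile the range of $a_2'm_1-a_1'm_2$ over $I^2$ has length $(a_1'+a_2')(N-1)\leqslant 2AN/d\leqslant 2p/d$, so the number of admissible $k$ is uniformly $O(1)$, and indeed at most one when $d\geqslant 2$.

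Summing these per-pair bounds against $c_{a_1}c_{a_2}$ and adding the diagonal should produce $R\leqslant\|\bfc\|_1^2+2N\V(\bfc;A)$: the $2N\V$ term captures the main density $2N(a_1,a_2)/(a_1+a_2)$ per pair, while the $\|\bfc\|_1^2$ term absorbs the additive ``$+1$'' coming from the line-by-line count. The main obstacle is making the constants fit exactly in the residual case $d=1$, where two admissible values of $k$ may coexist and the naive line-by-line tally doubles. One route to recover the factor $2$ is a geometric observation: if two parallel lines of the family both meet $I^2$ non-trivially, their perpendicular spacing $p/\sqrt{a_1^2+a_2^2}$ cannot exceed $\sqrt 2\,N$, forcing $\max(a_1,a_2)\gtrsim p/N\geqslant A$, so the extra contribution is of size $O(N^2/p)=O(N/A)$ and folds into $\|\bfc\|_1^2$. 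An alternative is to reparameterize by $n:=m_1-m_2\in[-(N-1),N-1]$: the congruence then determines $m_2\pmod p$ uniquely from $n$, yielding at most one pair per $n$, which together with the lattice count on the line $k=0$ should produce the required sharp constants.
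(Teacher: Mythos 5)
Your expansion of $R$, the diagonal computation, and the identification of the lattice structure coincide with the paper's argument: expand the square to obtain $\sum_{a,b}c_ac_b\,N(a,b)$ with $N(a,b)$ the number of $(m,n)\in I^2$ satisfying $bm\equiv an\pmod p$, reduce to the primitive pair $(a_1,b_1)=(a/d,b/d)$, and count lattice points on lines of direction $(a_1,b_1)$. You have also correctly pinpointed the delicate step that the paper's proof glosses over: since $(a_1+b_1)N$ can be as large as $2AN\leqslant 2p$ when $d=1$, two distinct lines $b_1m-a_1n=kp$ may simultaneously meet the box, so it is not literally true that all solutions are of the form $(m_0+ta_1,n_0+tb_1)$ for a single $(m_0,n_0)$ (one can build explicit examples, e.g.\ $p=17$, $A=N=4$, $(a,b)=(4,3)$, $M=4$, giving the two solutions $(5,8)$ and $(8,6)$). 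In this respect your write-up is more careful than the paper's.

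However, neither of your two proposed fixes is correct as stated. In route (a), the geometric constraint forces $\max(a_1,a_2)\gg A$, so the extra line contributes $O(1+N/A)$ per pair; summed against $c_{a_1}c_{a_2}$ this is $O\big(\|\bfc\|_1^2(1+N/A)\big)$, which does \emph{not} fold into $\|\bfc\|_1^2$ when $N>A$ — it folds instead into $2N\V(\bfc;A)$, via $\V(\bfc;A)\geqslant\tfrac12\sum_ac_a^2\geqslant\|\bfc\|_1^2/(2A)$. In route (b), reparameterizing by $n:=m_1-m_2$ only yields the trivial bound $N(a_1,a_2)\leqslant 2N-1$ (one $(m_1,m_2)$ per admissible $n$), which is far weaker than $1+2N(a_1,a_2)/(a_1+a_2)$; the remark about combining with the $k=0$ line does not repair this. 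Thus your proposal, like the paper's, really establishes \eqref{majRc} with the constant $2$ replaced by an unspecified absolute constant. That is all Theorem~\ref{Burgess} uses (only $\ll$ enters the application), so the argument carries through, but you should state the result with an implicit constant rather than claim the sharp coefficient $2$.
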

\begin{proof} 
Plainly
$$R(\bfc;A,M,N)=\sum_{a,b\leqslant A}c_ac_b\sum_{\substack{M<m,n\leqslant M+N\\ bm\md{an}p}}1.$$
Let $d:=(a,b)$, $a_1:=a/d$, $b_1:=b/d$. Since $AN\leqslant p$, there is at most one multiple of $p$ in the interval $]-AN,AN[$ and so the variables $m,n$ in the inner sum satisfy $m=m_0+ta_1$, $n=n_0+tb_1$ for some integer $t$ and fixed $m_0$, $n_0$ depending on $a$, $b$ and $p$. The condition $m,n\in]M,M+N]$ then implies that there are at most $$ 1+N/\max(a_1,b_1)\leqslant 1+2N(a,b)/(a+b)$$ choices for $t$. This is all we need. 
\end{proof}
\subsection{Proof of Theorem \ref{Burgess}}
 \hspace{\parindent} We retain the notation from \cite{Burgessrefine} and follow closely the argument.   We set
$$ \U_p:=\max_{1\leqslant x\leqslant p} \V_x$$ and proceed by induction on $N$. The induction hypothesis is that
there exists some constant $C$ such that
\begin{equation}
\label{induction}
\Vbs{\sum_{M<n\leqslant M+K}\chi(n)}\leqslant C K^{1-1/r}p^{(r+1)/4r^2} \U_p^{1/2 r}\quad(M\geqslant 1,\,1\leqslant K<N),
\end{equation} 
and set out to prove that this persists to hold for $N+1$.
\par \goodbreak
As in \cite{Burgessrefine}, $N<p^{1/4}$ forms the basis of our induction since \eqref{induction} trivially holds in this range. Still following \cite{Burgessrefine}, we define 
\begin{equation*}
A:=\fl{\frac{N}{16rp^{1/2r}}},\qquad   B:=\fl{rp^{1/2r}},
\end{equation*}
and note that, for any integers $a$, $b$, with $1\leqslant a\leqslant A$, $1\leqslant b\leqslant B$, we have 
\begin{align*}
\sum_{M<n\leqslant M+N}\chi(n)&= \sum_{M<n\leqslant M+N}\chi(n+ab) +V-W,
\end{align*}
with
$$V:=\sum_{M-ab<n\leqslant M}\chi(n+ab),\quad W:=\sum_{M+N-ab<n\leqslant M+N}\chi(n+ab).$$ 
By the induction hypothesis,  
$$
\max\big(|V|,|W|\big)\leqslant \tfrac{1}{4}C N^{1-1/r}p^{(r+1)/4r^2}   \U_p^{1/2r},
$$
which combined with the above implies that 
$$
\Vbs{\sum_{M<n\leqslant M+N}\chi(n)-\sum_{M<n\leqslant M+N}\chi(n+ab) }\leqslant \tfrac{1}{2}C N^{1-1/r}p^{(r+1)/4r^2}  \U_p^{1/2r}.
$$
 \par 
The main difference with the method of \cite{Burgessrefine} comes from our choice of the subset used to average. Given $\bfc\in(\R^+)^A$, we multiply the last inequality by $c_a$  and sum over $1\leqslant a\leqslant A$, $1\leqslant b \leqslant B$, to obtain 
\begin{equation}
\label{eq:Win}
\Vbs{\sum_{M<n\leqslant M+N}\chi(n)}\leqslant \frac{S}{B\norm\bfc_1}+\tfrac{1}{2}C N^{1-1/r}p^{(r+1)/4r^2} \U_p^{1/2r},
\end{equation}
with 
\begin{equation}
\label{eq:WW123}
S:=\sum_{M<n\leqslant M+N}\sum_{1\leqslant a \leqslant A}c_a\Vbs{\sum_{1\leqslant b \leqslant B}\chi(n+ab)}.
\end{equation}
Multiplying the innermost summation in (\ref{eq:WW123}) by $\overline{\chi(a)}$ and 
collecting the values of $n/a\, (\bmod\, p)$, we arrive at 
\begin{equation}
\label{eq}
S= \sum_{1\leqslant \ell\leqslant p}r(\ell;\bfc) \Vbs{\sum_{1\leqslant b \leqslant B}\chi(\ell+b)}.
\end{equation}
Along the lines of \cite{Burgessrefine}, we apply H\"older's inequality to get, for $r\geqslant 2$, 
\begin{equation}
\label{holder}
S^{2r}\leqslant \bigg(\sum_{1\leqslant \ell\leqslant p}r(\ell;\bfc) \bigg)^{2r-2}\bigg(\sum_{1\leqslant \ell\leqslant p}r(\ell;\bfc)^2 \bigg)\bigg(\sum_{1\leqslant \ell\leqslant p}\Vbs{\sum_{1\leqslant b \leqslant B}\chi(u+b)}^{2r}\bigg).
\end{equation}
We trivially have
$$\sum_{1\leqslant \ell\leqslant p}r(\ell;\bfc)=N\norm\bfc_1.$$
Applying \eqref{majRc} to the second factor on the right-hand side of \eqref{holder} and \eqref{majmom} to the third, we obtain
$$S^{2r}\ll N^{2r-2}\norm\bfc_1^{2r-2}\Big\{\norm\bfc_1^2+N\V(\bfc;A)\Big\}\Big\{B^rp+B^{2r}\sqrt{p}\Big\}. $$
Select $\bfc$ such that $\V(\bfc;A)=\V_A\leqslant \U_p$. Hence $N\V(\bfc;A)\leqslant  N\U_p\norm\bfc_1^2/A$. This  upper bound being $\gg\norm\bfc_1^2$ and the last expression between curly brackets being $\ll (2r)^{2r-1}p^{3/2}$, we infer that
$$S^{2r}\ll N^{2r-1}\norm\bfc_1^{2r}\U_pp^{3/2}/A\ll N^{2r-2}\norm\bfc_1^{2r}(2r)^{2r-1}p^{(3r+1)/2r}\U_p$$
and so, there exists an absolute constant $C_0$ such that
$$\frac{S}{B\norm\bfc_1}\leqslant C_0 N^{1-1/r}p^{(r+1)/4r^2}\U_p^{1/2r}.$$
Selecting $C\geqslant 2C_0$ in \eqref{eq:Win} terminates the proof.

\section{Proof of Theorem \ref{mainth}}\label{ProofTh}
\hspace{\parindent} 
Let $p$ be a prime, and put $q:=\lfloor\sqrt{p/3}\rfloor$. For  even character $\chi\in X_p^+$ and $\bfc\in(\R^+)^q$, we consider 
 \begin{equation}\label{weight} M(\chi)=\sum_{m\leqslant q }c_m\overline{\chi(m)},\end{equation}   and the first  mollified moment 
\begin{equation}\label{mollif}
M_1(p) :=\sum_{\chi\in X_p^+}M(\chi)\vartheta(x;\chi) .
\end{equation} 
We further define 
\begin{equation*}
\begin{aligned}
M_0(p)&:=\big|\big\{ \chi\in X_p^+:\vartheta(x;\chi)\neq 0 \big\}\big|,\\ 
M_2(p)&:= \sum_{\chi\in X_p^+}\vert \vartheta(x;\chi)\vert^2,\  M_4(p):=\sum_{\chi\in X_p^+}\vert M(\chi)\vert^4.
\end{aligned}
\end{equation*}
 By H\"{o}lder's inequality, we have 
\begin{equation}\label{Holder} M_1(p) \leqslant M_2(p)^{1/2} M_4(p)^{1/4} M_0(p)^{1/4}.  \end{equation}
\par 
 In \cite{LM}, the authors achieved an asymptotic formula for the fourth moment of $\vert \vartheta(x;\chi)\vert$ showing that, after expanding the fourth power, the main contribution comes from the solutions to $mn = m'n'$. They actually obtained a precise asymptotic formula for the related counting function:
$$\big|\big\{m,m',n,n':mn=m'n', m^2+n^2+m'^2+n'^2 \leqslant x\big\}\big| \sim \tfrac{3}{ 8}x\log x.$$ \par 
In order to improve on this result, we need to reduce the effect of the logarithmic factor. Using \eqref{Holder}, we can relate the problem to a similar one involving the weights~$c_n$. We recall the notation $q:=\lfloor\sqrt{p/3}\rfloor$ from above and $\E(\bfc;N)$ from \eqref{weightedenergy}.
\par 
\begin{lemma}\label{moment1} For large prime $p$ and any sequence $\bfc\in (\R^+)^q$, we have
\begin{equation} 
\label{minM0}
M_0(p)  \gg \norm\bfc_1^4 /\E(\bfc;q).
\end{equation} 
In particular,  $M_0(p) \gg p/\E_q.  $
\end{lemma}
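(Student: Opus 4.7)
The plan is to combine the H\"older inequality \eqref{Holder} with sharp estimates for $M_1(p)$ (from below) and $M_2(p),M_4(p)$ (from above). Expanding the definition of $M_1(p)$ and invoking the orthogonality relation $\sum_{\chi\in X_p^+}\chi(n)\overline{\chi(m)}=\tfrac12(p-1)\1_{n\,\md{\pm m}{p}}$ valid for $(mn,p)=1$, I would isolate the diagonal $n=m$. Since $m\leqslant q$ implies $m^2x/p\leqslant x/3$, the diagonal gives $\gg p\norm\bfc_1$, while every non-diagonal term satisfies $n\geqslant p-q$, so the Gaussian factor $\e^{-\pi n^2 x/p}$ renders the corresponding contribution exponentially small in~$p$. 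Hence $M_1(p)\gg p\norm\bfc_1$.

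For $M_2(p)$, the same orthogonality reduces the second moment to a diagonal contribution, yielding $M_2(p)\asymp p\sum_{n\geqslant 1}\e^{-2\pi n^2 x/p}\asymp p^{3/2}$ by standard Gaussian comparison. For $M_4(p)$, expanding $|M(\chi)|^4$ and applying orthogonality produces
\[
M_4(p)=\tfrac12(p-1)\sum_{\substack{m,n,m',n'\leqslant q\\ mm'\,\md{\pm nn'}{p}}}c_mc_{m'}c_nc_{n'}.
\]
The crucial observation is that, since each variable is bounded by $q=\lfloor\sqrt{p/3}\rfloor$, both products $mm'$ and $nn'$ lie in $[1,p/3]$, so the congruence modulo $p$ forces the exact integer equality $mm'=nn'$. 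Thus $M_4(p)=\tfrac12(p-1)\E(\bfc;q)$.

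Inserting these three estimates into \eqref{Holder} and solving for $M_0(p)$ gives
$p\norm\bfc_1\ll M_0(p)^{1/4}p^{3/4}\{p\E(\bfc;q)\}^{1/4}$, equivalently $M_0(p)\gg\norm\bfc_1^4/\E(\bfc;q)$, which is \eqref{minM0}. The ``in particular'' clause then follows by choosing $\bfc$ (with $\norm\bfc_1=1$) nearly achieving the infimum in \eqref{defm} with $q$ in place of $N$, so that $\E(\bfc;q)\asymp\E_q/q^2$, whence $M_0(p)\gg q^2/\E_q\gg p/\E_q$ since $q^2\gg p$.

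The main delicate point is the alignment between the support restriction $m\leqslant q$ and the modulus $p$: the specific choice $q=\lfloor\sqrt{p/3}\rfloor$ is precisely what makes the congruences $mm'\equiv\pm nn'\,(\bmod\,p)$ collapse into the exact equalities $mm'=nn'$, thereby tying $M_4(p)$ directly to the weighted multiplicative energy $\E(\bfc;q)$ rather than to some larger convolution quantity over residue classes. The secondary bookkeeping task, namely quantitative control of off-diagonal tails in $M_1(p)$ and $M_2(p)$, is painless thanks to the rapid Gaussian decay of the theta kernel.
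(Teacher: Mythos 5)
Your proposal is correct and follows essentially the same route as the paper: the H\"older inequality \eqref{Holder} combined with orthogonality over $X_p^+$ to get $M_1(p)\gg p\norm\bfc_1$, $M_2(p)\ll p^{3/2}$, and $M_4(p)=\tfrac12(p-1)\E(\bfc;q)$, where the choice $q=\lfloor\sqrt{p/3}\rfloor$ collapses the congruences $mm'\equiv\pm nn'\,(\bmod\,p)$ to the equality $mm'=nn'$. The only cosmetic difference is that your exponential-decay estimate for the off-diagonal terms in $M_1(p)$ is not needed, since positivity of the $c_m$ already allows one to discard them when bounding from below.
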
 

\begin{proof}
Recall the classical orthogonality relations for the subgroup $X_p^+$  of even Dirichlet  characters modulo $p$, viz.
$$\sum_{\chi\in X_p^+}\chi (m)\overline{\chi (n)}
=\begin{cases}\tfrac12
(p-1) & \textrm{if } m\md{\pm n}p \textrm{ and } p\nmid m,\\
0 & \textrm{otherwise}. \end{cases}$$
Thus   
\begin{equation}\label{lowM1}
\begin{aligned}
M_1(p) &=  \sum_{\chi\in X_p^+} \sum_{m\leqslant q} c_m\overline{\chi (m)}\sum_{n\geqslant 1}\chi(n)\e^{-\pi n^2x/p}\\& \geqslant  \tfrac12(p-1)\sum_{m\leqslant q}c_m \e^{-\pi m^2x/p}\gg p\norm\bfc_1.
\end{aligned} 
\end{equation}
\par 
Similarly, with the notation \eqref{weightedenergy},
\begin{equation}
\label{M4p}
M_4(p)=
\sum_{ m,m', n,n'  \leqslant q }c_mc_{m'}c_nc_{n'}
\sum_{\chi\in X_p^+}\chi (m'n')\overline{\chi (mn)}
=\tfrac12(p-1)\E(\bfc;q)
\end{equation}

and 
$$
M_2(p)= \tfrac12
(p-1)  \sum_{ \substack{ m,n\geqslant 1\\
m\md{\pm n} p }}      \e^{- \pi (m^2+n^2)x/p} \ll p^{3/2}.
$$
Inserting these estimates back into \eqref{Holder} yields \eqref{minM0}.
\end{proof}  

Combining Lemma \ref{moment1} with Theorem \ref{multh} completes  the proof of Theorem \ref{mainth}.

\section{Proof of Theorem \ref{firstmomentcarac}}
\hspace{\parindent} We adopt  techniques similar to those used in Section \ref{ProofTh}. Parallel to \eqref{weight}, given $\bfc\in(\R^+)^N$, we define $$M(N;\chi)=\sum_{m\leqslant N}c_m\overline{\chi(m)}.$$ 
For $r\in]0,4/3[$, let $ u:=r/(4-2r)$, $ v:=(4-3r)/(4-2r)$ so that $u+v=1$. Further define $s:=4-2r$ and $t:=(8-4r)/(4-3r)$ and note that $1/s+1/t+1/4=1$.
Finally, let us put, for $p>2$,
$$\mathfrak{S}_k(N):= \frac{1}{p-2} \sum_{\chi \neq \chi_0} \vert S(N;\chi) \vert^k\quad(k>0), \quad \mathfrak{M}_4(N):=\frac{1}{p-2} \sum_{\chi \neq \chi_0} \vert M(N;\chi) \vert^4. $$
\par  
Let us first assume $N\leqslant \sqrt{p}$. 
Representing $S(N;\chi)=S(N;\chi)^{ u}S(N;\chi)^{ v}$ and applying H\"{o}lder's inequality, we get 
\begin{equation}
\label{momentsr} 
\frac{1}{p-2}\Vbs{\sum_{\chi \neq \chi_0}S(N;\chi)M(N;\chi)} \leqslant \mathfrak{S}_r(N)^{1/s} \mathfrak{S}_2(N)^{1/t} \mathfrak{M}_4(N)^{1/4},  \end{equation}
\par 
Orthogonality relations, readily yield that $\mathfrak{S}_2(N) \ll N$. Similarly to \eqref{lowM1}, the left-hand side of \eqref{momentsr} is $\gg \norm\bfc_1$, and $\mathfrak{M}_4(N) \ll \E(\bfc;N)$ as in \eqref{M4p}. Combining  these bounds, we deduce
$$\mathfrak{S}_r(N)\gg\norm\bfc_1^s\E(\bfc;N)^{-s/4}N^{-s/t}\gg N^{r/2}/\E_N^{1-r/2},$$
by choosing $\bfc$ optimally.
\par 
In order to extend the range to $\sqrt{p}<N\leqslant \tfrac12p$, we appeal to Plya's formula
\begin{equation}
\label{Polya}
\sum_{n\leqslant p/N}\chi(n)=\frac{\tau(\chi)}{2\pi i}\sum_{0<|h|\leqslant H}\frac{\overline{\chi(h)}}h\Big(1-\e^{-2\pi ih/N}\Big)+O\Big(1+\frac{p\log p}H\Big)\qquad (H\geqslant 1),
\end{equation}
where $\tau(\chi)$ is the usual Gauss sum of modulus $\sqrt{p}$. We select $H:=\sqrt{Np}(\log p)^2$. Restricting to odd characters, we may replace the complex exponentials by cosines and argue as in the first part with the $h$-sum.
 
\section{Concluding remarks}
\label{combiproblems}
\hspace{\parindent} 
 Under the additional restriction $\bfc\in \{0,1\}^N$, the first problem considered in Section \ref{Intro} is equivalent to  constructing  a set $\B \subset [1,N]$ of high density such that the associated Gl sum is small. In Section \ref{gcdsumth}, we showed that a large subset $\B$ of $\{n\leqslant N:\Omega(n)=\fl{\beta\log_2N}\}$ with $\beta \approx 0.48155$  verifies 
$$\V(\1_\B;N) \ll \vert \B \vert (\log\vert \B \vert)^{o(1)}$$ or, in another words, that the multiplicative energy verifies
$$E([1,N],\B) \ll N \vert \B \vert (\log N)^{o(1)}. $$ Theorem \ref{gcdsumth} tells us that this is essentially the densest set with this property. \par 
In the symmetric case, the question becomes: find the maximal $\beta=\beta_N\in]0,1]$ such that there exists a set $\B \subset \left[1,N\right]$ of density $\beta$ satisfying $ E(\B ,\B )\ll \vert \B \vert^2 (\log N)^{o(1)}. $ In Theorem~\ref{multh}, we proved that a large subset of $\{n\leqslant N:\Omega(n)=\fl{(\log_2N)/\log 4}\}$ yields the optimal density $(\log N)^{-\delta/2+o(1)}$.

\section*{Acknowledgements}\hspace{\parindent} 
The authors express their gratitude to Kannan Soundararajan for drawing their attention to~\cite{BS16}, to Kevin Ford for pointing to his work \cite{Fo07}, thereby improving the upper bounds in Theorems \ref{gcdsumth} and \ref{multh}, and to Ping Xi for the observations quoted after the statement of Theorem \ref{firstmomentcarac}. The second author thanks St\'{e}phane Louboutin for valuables remarks, and Igor Shparlinski for indicating  reference~\cite{Burgessrefine} after a first version of the draft was released. 
 second author also acknowledges support of the Austrian Science Fund (FWF), stand-alone project P 33043  ``Character sums, L- functions and applications'' and START-project Y-901 ``Probabilistic methods in analysis and number theory'', headed by Christoph Aistleitner.

\bibliographystyle{alpha}
\small

\Addresses

\end{document}